\documentclass[12pt]{book}
\usepackage{amsthm,amsmath}
\usepackage[margin=1.0in]{geometry}
\usepackage{cleveref}

\usepackage{times}
\usepackage{newtxmath}

\newtheorem{problem}{Problem}
\newtheorem{conjecture}{Conjecture}
\newtheorem{theorem}{Theorem}

\theoremstyle{definition}
\newtheorem{definition}{Definition}
\newtheorem{reference}{Reference}
\newtheorem{assumption}{Assumption}
\newtheorem{setting}{Setting}
\newtheorem{remark}{Remark}

\pagestyle{plain}

\begin{document}

\title{$\infty$-Categorical Generalized Langlands Correspondence II: Langlands Program Formalism}
\author{Xin Tong}
\date{}

\maketitle

\newpage
\subsection*{Abstract}
\noindent We extend the Langlands program in various subprograms with certain different generalizations: (1) Mixed-parity functorial perturbation of the usual Langlands program after Fargues-Scholze in all characteristics; (2) Robba-Frobenius sheafified functorial perturbation of the usual Langlands program after Fargues-Scholze and Kedlaya-Liu in all characteristics; (3) Arithmetic $D$-module theoretic functorial perturbation of the usual Langlands program after Fargues-Scholze and Abe-Kedlaya-Xu. In certain localized setting we construct Langlands correspondence from smooth representation side to Weil side through geometrized generalized Bernstein center. We also discuss certain generalization of the motivic local Langlands program after Scholze.

\newpage
\tableofcontents

\newpage
\begin{reference} $\blacksquare$ \cite{DHKM}, \cite{La} for Langlands program, \cite{Ta} for $p$-adic analysis, \cite{DI} for geometric Langlands program, \cite{DII} for geometric Langlands program, \cite{LLa} for geometric Langlands program, \cite{Fon} for $p$-adic analysis, \cite{BS} for the idea of the generalization herein, \cite{VLa} for geometric Langlands program, \cite{GL} for geometric local Langlands program, \cite{SchI} for perfectoid spaces, \cite{SchII} for modern $p$-adic analysis, \cite{SchIII} for analytic stacks, \cite{KLI} for $p$-adic analysis, \cite{KLII} for $p$-adic analysis, \cite{FS} for generalized local Langlands through deep geometrization, \cite{CSI} for condensed mathematics, \cite{CSII} for condensed analytic geometry, \cite{KI} for Drinfeld lemma for isocrystals, \cite{KXII} for Drinfeld lemma for isocrystals, \cite{AI} generalized Langlands for isocrystals in the $\mathrm{GL}$ situation, \cite{DK}, \cite{XZ}, \cite{Z}, \cite{Scho1}, \cite{Scho2}, \cite{V1}, \cite{A1}, \cite{A2}, \cite{A3}, \cite{A4}, \cite{G}, \cite{RS}.
\end{reference}

\newpage
\chapter{Generalized Langlands Program}

\section{Generalized Geometric Langlands Dualit\'e: The Motivation}

\noindent Let $X$ be an algebraic curve over a field. Let 
\begin{align}
G\longrightarrow X
\end{align}
be any algebraic reductive group over $X$. In the philosophy of \cite{La}, one can consider relating the following two seemingly hyper-unrelated $\infty$-categories. The first category is the corresponding $\infty$-category of $\ell$-adic perverse complexes over the moduli Artin stack of $G$-bundles over $X$. The second category is the corresponding derived $\infty$-category of coherent sheaves over the moduli stack of $G^\text{dualit\'e}$-local systems over $X$ in some coefficient with characteristic which is not the same as the base field (such as in real algebraic geometry, complex algebraic geometry, functional field situation). Dual Langlands group here represents the group flipping the reductive datum. We denote them as:
\begin{align}
\mathrm{DCate}_X(\mathrm{Bundle}_G,A)
\end{align}
and
\begin{align}
\mathrm{DCate}(\mathrm{Stack}_{X,\mathrm{local}},\mathcal{O}^{G^\text{dualit\'e}}).
\end{align}
These are very usual geometric objects in the corresponding classical algebraic geometry. Say you take any curve then you can consider those categories as above. Langlands Program is a program looking at certain correspondence between the corresponding two $\infty$-categories above. In certain scenario we expect that they are equivalent in certain well-esablished context. Suppose we now consider the profinite \'etale fundamental group of $X$, $\Pi_X$, and we consider a two-fold covering $\Pi_{X,2}$ of this group in the category of all the profinite groups, then we consider the corresponding moduli Artin stack of the morphism:
\begin{align}
\Pi_{X,2} \rightarrow G^\text{dualit\'e}.
\end{align}
We then use the corresponding notation:
\begin{align}
\mathrm{DCate}(\mathrm{Stack}_{X,\mathrm{local}},\mathcal{O}^{G^\text{dualit\'e}}).
\end{align}
to denote the corresponding coherent sheaves over this stack.

\begin{problem}
What should we modify for the $\mathrm{Bundle}_G$ side to correspond to the modification on considering covering of profinite \'etale fundamental groups? To be more precise, fixing the original Langlands functoriality and then starting with a representation:
\begin{align}
\Pi_{X,2} \rightarrow G^\text{dualit\'e},
\end{align}
what kind of geometric object on the other side will correspond to this representation? 
\end{problem}
The author currently does \textit{not} have an answer to this globalized generalized Langlands conjecture. You might wonder what if we do the infinitesimal localization to some point over the curves $X$, say $x$. Then following \cite{GL} we do actually have the local duality, which ends up with the following question. $\mathrm{Bundle}^\mathrm{local}_G$ will be the Artin algebraic stack sending any scheme to the corresponding groupoid of the corresponding vector bundles (with $G$-bundle structure) over $\mathrm{Spec}\mathcal{O}_{X,x}$, i.e. the corresponding algebraic localization of $X$ around $x$. Taking the formal completion we have the corresponding Artin formal stack $\mathrm{Bundle}^\mathrm{local,formal}_G$.

\begin{problem}\mbox{}\\
Suppose our curve now is over $\mathbb{F}_q$. What should we modify for the $\mathrm{Bundle}^\mathrm{local}_G$ (or $\mathrm{Bundle}^\mathrm{local,formal}_G$) side to correspond to the modification on considering covering of profinite \'etale fundamental groups? To be more precise, fixing the original Langlands functoriality and then starting with a representation:
\begin{align}
\Pi_{X,\mathcal{O}_{X,x},2} \rightarrow G^\text{dualit\'e},
\end{align}
what kind of geometric object on the other side will correspond to this representation? 
\end{problem}

The author still cannot answer this question, but the infinitesimal localization in the spirit of modern relative $p$-adic Hodge theory will give some answer, see the following chapters.

\begin{remark}
Over $\mathbb{F}_1$, the picture is completely unknown. Therefore the corresponding version of the problem above in the philosophy of Langlands is completely unknown as well.
\end{remark}

\indent After Abe, Kedlaya and Kedlaya-Xu \cite{AI}, \cite{KI} and \cite{KXII} one can actually consider the following picture. $\mathrm{Bundle}^\mathrm{local}_G$ will be the Artin algebraic stack sending any scheme to the corresponding groupoid of the corresponding vector bundles (with $G$-bundle structure) over $\mathrm{Spec}\mathcal{O}_{X,x}$, i.e. the corresponding algebraic localization of $X$ around $x$. Taking the formal completion we have the corresponding Artin formal stack $\mathrm{Bundle}^\mathrm{local,formal}_G$. We then consider the corresponding derived category of all the arithmeic $D$-modules in this setting which is actually again compatible with \cite{FS} and \cite{GL}. Although essentially speaking \cite{FS} and \cite{GL} did use different stacks in the corresponding different categories. This category is actually simpler in this local setting, since the corresponding vector bundles are basically over a formal disk which is actually presenting the corresponding category of arithmetic $D$-modules in a very transparent way.

\begin{problem}\mbox{}\\
Suppose our curve now is over $\mathbb{F}_q$. What should we modify for the $\mathrm{Bundle}^\mathrm{local}_G$ (or $\mathrm{Bundle}^\mathrm{local,formal}_G$) side to correspond to the modification on considering covering of profinite \'etale fundamental groups? To be more precise, fixing the original Langlands functoriality and then starting with a representation:
\begin{align}
\Pi^\mathrm{Tannakian}_{X,\mathcal{O}_{X,x},2} \rightarrow G^\text{dualit\'e},
\end{align}
what kind of geometric object on the other side will correspond to this representation in the corresponding category of isocrytals? Here the corresponding Tannakian groups are the Tannakian groups for the overconvergent isocrystals in the local setting as in \cite{AI}, \cite{KI}, \cite{KXII}, \cite{DK}.
\end{problem}

\newpage
\section{Main Theorems}

Following essentially Langlands philosophy in \cite{La}, we extend the Langlands program in various subprograms with certain different generalizations: (1) Mixed-parity functorial perturbation of the usual Langlands program after Fargues-Scholze in all characteristics; (2) Robba-Frobenius sheafified functorial perturbation of the usual Langlands program after Fargues-Scholze and Kedlaya-Liu in all characteristics; (3) Arithmetic $D$-module theoretic functorial perturbation of the usual Langlands program after Fargues-Scholze and Abe-Kedlaya-Xu. In certain localized setting we construct Langlands correspondence from smooth representation side to Weil side through geometrized generalized Bernstein center. We first have the following results on the cohomologization formalism of one side of the correspondence, namely the smooth representation side.

\begin{theorem}
The $v$-prestack as below $\mathrm{Bundle}_{G,2}$ is actually $v$-stack, which is furthermore small as in \cite{FS}, and is furthermore Artin stack in the category of all the $v$-stacks.
\end{theorem}

\begin{theorem}
For any finite extension $K$ over $\mathbb{Q}_p$ or $\mathbb{F}_q((z))$ with $\ell\neq p$, $p>2$ we have well-defined Schur-irreducible sheaves in the corresponding derived $\infty$-category $D(\mathrm{Bundle}_{G,2})_{\text{lisse},\blacksquare,A}$ which are containing all the corresponding Schur-irreducible sheaves in the corresponding derived $\infty$-category $D(\mathrm{Bundle}_{G})_{\text{lisse},\blacksquare,A}$ in the usual setting in \cite{FS} in certain functorial manner, while the latters are related directly to the corresponding smooth representations of the corresponding reductive group $G(K)$ by considering the Bernstein center.
\end{theorem}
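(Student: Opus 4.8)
The plan is to build the category $D(\mathrm{Bundle}_{G,2})_{\text{lisse},\blacksquare,A}$ in parallel with the Fargues-Scholze construction of $D(\mathrm{Bundle}_G)_{\text{lisse},\blacksquare,A}$, using the fact (from the first theorem of this section) that $\mathrm{Bundle}_{G,2}$ is a small Artin $v$-stack. First I would fix the covering datum: the two-fold cover $\Pi_{X,2}$ of the étale fundamental group induces, at the infinitesimal-local level around $x$, a covering $\Pi_{X,\mathcal{O}_{X,x},2}$, and $\mathrm{Bundle}_{G,2}$ is the moduli $v$-stack classifying $G$-bundles on the Fargues-Fontaine curve together with the compatible reduction-of-structure data along this cover (equivalently, $G$-bundles equipped with a $\mu_2$-equivariance or mixed-parity structure). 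The key structural input is a map of Artin $v$-stacks $q\colon \mathrm{Bundle}_{G,2}\to \mathrm{Bundle}_G$ which is (pro-)étale, or at least nice enough (representable in locally spatial diamonds, cohomologically smooth up to the relevant twist) that pullback $q^*$ and the right adjoint $q_*$ (or $q_!$, via the cohomological smoothness) are well-behaved on the lisse-étale derived categories with the solid/$\blacksquare$ enrichment over $A$.

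**Main steps.**

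Step one: set up $D(\mathrm{Bundle}_{G,2})_{\text{lisse},\blacksquare,A}$ as the appropriate $\blacksquare$-linear stable $\infty$-category of lisse-étale $A$-sheaves on the Artin $v$-stack $\mathrm{Bundle}_{G,2}$, following the formalism of \cite{FS} and \cite{GL}, and verify it has the standard six-functor-formalism properties (in particular that it is compactly generated and that the relevant Bernstein-center action is defined). Here I use that $p>2$ and $\ell\neq p$ so that the $\mu_2$-covering is étale and invertible in $A$, which makes the descent along $q$ clean. Step two: define the notion of Schur-irreducibility intrinsically — an object $F$ with $\mathrm{End}(F)=A$ (or a field finite over it) in the Schur sense — and show the pullback functor $q^*\colon D(\mathrm{Bundle}_G)_{\text{lisse},\blacksquare,A}\to D(\mathrm{Bundle}_{G,2})_{\text{lisse},\blacksquare,A}$ is fully faithful on a suitable subcategory (using that $q$ is a $\mu_2$-gerbe/torsor with $2$ invertible, so $q_*q^*\simeq \mathrm{id}\oplus(\text{sign-twist})$), and hence carries Schur-irreducible objects to direct sums of Schur-irreducible objects; extracting a summand (or the isotypic piece) gives the claimed functorial embedding of the Fargues-Scholze Schur-irreducible sheaves into the new category. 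Step three: identify the image and show the new category contains strictly more — the genuinely mixed-parity objects, those that are not pulled back from $\mathrm{Bundle}_G$, which are detected by nontrivial action of the deck group $\mu_2$; these correspond to the "extra" representations appearing in the mixed-parity perturbation. Step four: transport the Bernstein-center statement: since in \cite{FS} the Schur-irreducible sheaves on $\mathrm{Bundle}_G$ supported on the neutral $\mathrm{Bun}_G^0$ correspond to smooth irreducible representations of $G(K)$ via the action of the geometric Bernstein center $\mathcal{Z}(G(K))$, and $q^*$ intertwines the Bernstein-center actions (because the center is built from the moduli of local systems, which is unchanged by the covering up to the parity twist), the pulled-back sheaves inherit exactly the same correspondence, giving the final clause.

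**Expected main obstacle.**

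The hard part will be Step two made precise: controlling the interaction between the covering map $q$ and the \emph{lisse}-sheaf condition together with the solid/$\blacksquare$-enrichment simultaneously. On non-quasicompact Artin $v$-stacks like $\mathrm{Bundle}_G$ the lisse-étale category is subtle, compact generation is delicate, and one must check that $q$ being a $\mu_2$-torsor is preserved after the Beauville-Laszlo/Fargues-Fontaine-curve reinterpretation — i.e. that $\mathrm{Bundle}_{G,2}$ really is Artin with $q$ cohomologically smooth of dimension $0$, which is where the first theorem of this section is invoked but may need strengthening to a statement about $q$ rather than just about $\mathrm{Bundle}_{G,2}$ in isolation. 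A secondary obstacle is making the Schur-irreducibility transfer functorial in $G$ and compatible with the Langlands functoriality maps $G\to G'$, since the covering $\Pi_{X,2}$ does not obviously transform functorially under all such maps — one likely needs to restrict to functoriality morphisms that lift to $G^{\text{dualit\'e}}_2$, and this compatibility constraint should be recorded explicitly in the statement of "functorial manner."
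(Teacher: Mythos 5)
Your plan is compatible with the paper's route, but you should know that the paper's own proof of this statement is a single sentence (``putting all we have proved in this section we have the theorem follows''), assembling exactly two prior inputs: that $\mathrm{Bundle}_{G,2}$ is a small Artin $v$-stack (proved there by presenting it as a fiber product of $\mathrm{Bundle}_G$ with the two-fold cover $\mathrm{Stack}_{\mathrm{FF},2}\rightarrow\mathrm{Stack}_{\mathrm{FF}}$ obtained by adjoining $t^{1/2}$ to the period sheaves), and that the categories $D(\mathrm{Bundle}_{G,2})_{\text{lisse},\blacksquare,A}$ are well-defined by running the machinery of \cite[Chapter V, Chapter VII]{FS} on that stack. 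Your Step one is exactly this. Your Steps two through four --- the analysis of $q\colon\mathrm{Bundle}_{G,2}\to\mathrm{Bundle}_G$ as a $\mu_2$-covering with $2$ invertible in $A$, the splitting $q_*q^*\simeq\mathrm{id}\oplus(\text{twist})$, the resulting full faithfulness on a summand, the detection of genuinely new mixed-parity objects by the deck action, and the intertwining of Bernstein-center actions --- have no counterpart in the paper, which nowhere argues why Schur-irreducibility is preserved under the transport or why the containment is functorial; so your proposal is strictly more substantive on precisely the clauses of the statement that carry content. Two cautions in reconciling with the paper: first, the paper defines $\mathrm{Bundle}_{G,2}$ as the moduli of $G$-bundles \emph{on the covering curve} $\mathrm{Stack}_{\mathrm{FF},2}$, whereas you describe it as bundles downstairs equipped with equivariance data; these are descent-dual descriptions and the direction of the comparison functor (pullback of bundles along the cover versus the fiber-product projection) should be fixed consistently before asserting the splitting of $q_*q^*$. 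Second, your worry that the theorem on $\mathrm{Bundle}_{G,2}$ alone does not control the geometry of $q$ (cohomological smoothness of relative dimension zero) is well placed: the paper's fiber-product presentation is what would supply this, and it is the step most in need of justification in both your account and the paper's.
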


\begin{theorem}
For any finite extension $K$ over $\mathbb{Q}_p$ or $\mathbb{F}_q((z))$ with $\ell\neq p$, $p>2$ we have well-defined Schur-irreducible sheaves in the corresponding derived $\infty$-category $D(\mathrm{Bundle}_{G,2})_{\text{KL},\mathrm{lisse},\blacksquare,\widetilde{C}_\blacksquare}$ which are containing all the corresponding Schur-irreducible sheaves in the corresponding derived $\infty$-category $D(\mathrm{Bundle}_{G})_{\text{KL},\mathrm{lisse},\blacksquare,\widetilde{C}_\blacksquare}$ in the usual setting in \cite{FS} in certain functorial manner, while the latters are related directly to the corresponding smooth representations of the corresponding reductive group $G(K)$ by considering the Bernstein center.
\end{theorem}

\indent Then following \cite{VLa}, \cite{FS}, \cite{AI}, \cite{KI}, \cite{KXII} we have the following generalization. Now we consider the corresponding interaction between the two sides, where this interaction literally happens at the geometric Bernstein center.

\begin{theorem}
For any finite extension $K$ over $\mathbb{Q}_p$ or $\mathbb{F}_q((z))$ with $\ell\neq p$, $p>2$ we have well-defined Schur-irreducible sheaves in the corresponding derived $\infty$-category $D(\mathrm{Bundle}_{G,2})_{\text{lisse},\blacksquare,A}$ which are containing all the corresponding Schur-irreducible sheaves in the corresponding derived $\infty$-category $D(\mathrm{Bundle}_{G})_{\text{lisse},\blacksquare,A}$ in the usual setting in \cite{FS} in certain functorial manner, while the latters are related directly to the corresponding smooth representations of the corresponding reductive group $G(K)$ by considering the Bernstein center. Then for any such Schur-irreducible complex there exist some $W_{K,2}$-parameterization into Langlands dual group in well-defined functorial way, compatible with the usual Fargues-Scholze correspondence. Here we make the same requirement on $A$ as in \cite{FS} which is chosen to be an algebraic closure of the $\ell$-adic field containing the square root of the residual field cardinality.
\end{theorem}

\begin{theorem}
For any algebraic curve $X$ over finite field, and we fix a point $x\in X$. we have well-defined Schur-irreducible sheaves in the corresponding derived $\infty$-category $D(\mathrm{Bundle}_{G})_{\mathrm{arithmetic-D}}$. Then for any such Schur-irreducible complex there exist some $W^\mathrm{Tannakian}_{F_x}$-parameterization into Langlands dual group in well-defined functorial way, compatible with the usual Abe's correspondence for $\mathrm{GL}$. Here the corresponding Tannakian groups are the Tannakian groups for the overconvergent isocrystals in the local setting as in \cite{AI}, \cite{KI}, \cite{KXII}, \cite{DK}.
\end{theorem}

\newpage
\chapter{Stacky Approach to the Generalized Langlands Program}

\section{Generalization through Geometrization}

\noindent In this section we prove the following theorems:

\begin{theorem}
The $v$-prestack as below $\mathrm{Bundle}_{G,2}$ is actually $v$-stack, which is furthermore small as in \cite{FS}, and is furthermore Artin stack in the category of all the $v$-stacks.
\end{theorem}

\begin{theorem}
For any finite extension $K$ over $\mathbb{Q}_p$ or $\mathbb{F}_q((z))$ with $\ell\neq p$, $p>2$ we have well-defined Schur-irreducible sheaves in the corresponding derived $\infty$-category $D(\mathrm{Bundle}_{G,2})_{\text{lisse},\blacksquare,A}$ which are containing all the correspnding Schur-irreducible sheaves in the corresponding derived $\infty$-category $D(\mathrm{Bundle}_{G})_{\text{lisse},\blacksquare,A}$ in the usual setting in \cite{FS} in certain functorial manner, while the latters are related directly to the corresponding smooth representations of the corresponding reductive group $G(K)$ by considering the Bernstein center.
\end{theorem}

\begin{theorem}
For any finite extension $K$ over $\mathbb{Q}_p$ or $\mathbb{F}_q((z))$ with $\ell\neq p$, $p>2$ we have well-defined Schur-irreducible sheaves in the corresponding derived $\infty$-category $D(\mathrm{Bundle}_{G,2})_{\text{KL},\mathrm{lisse},\blacksquare,\widetilde{C}_\blacksquare}$ which are containing all the corresponding Schur-irreducible sheaves in the corresponding derived $\infty$-category $D(\mathrm{Bundle}_{G})_{\text{KL},\mathrm{lisse},\blacksquare,\widetilde{C}_\blacksquare}$ in the usual setting in \cite{FS} in certain functorial manner, while the latters are related directly to the corresponding smooth representations of the corresponding reductive group $G(K)$ by considering the Bernstein center.
\end{theorem}

\noindent We now answer the problem from the perspectives of certain infinitesimal localization from the main development of \cite{VLa}, \cite{FS}. The corresponding consideration is basically considering the corresponding a slightly different version of the stacks in the category of certain analytic stacks. This certainly causes further foundational problems, since the topologization and functional analytification in our current setting cannot be easily translated to the corresponding algebraic setting. This is actually quite clear say in the corresponding formal scheme setting but in that setting the corresponding difficulty is not so heavy by directly mimicking the corresponding algebraic theory. In the analytic geometry the problem can be very tricky for instance the corresponding analytification cannot be very easily achieved from the algebraic stacky consideration through the obvious analytification. Another thing is that by considering further analytification consideration one can actually have more useful structures such as more Galois action, \'etale cohomological consideration and so on. Let us first consider the following assumption:

\begin{assumption}
Let $K$ be a finite extension of $\mathbb{Q}_p$ or $\mathbb{F}_q((z))$, where we assume that we have another prime number $\ell \neq p$ and $p>2$. We consider any reductive group $G/K$ over $K$ as in the corresponding consideration in \cite{FS}, this field $K$ is fixed whenever we talk about the corresponding generalized Langlands conjecture in our setting.
\end{assumption}

\begin{conjecture}
We first use the notation $G^\text{duali\'e}$ to denote the corresponding reductive dual group by flipping the corresponding reductive datum while we consider the first right cross product action from the Weil group $W_K$, where we use the corresponding notation $G^\text{Lan}$ to denote this:
\begin{align}
G^\text{duali\'e} \overset{\mathrm{right}}{\times} W_K.
\end{align}
For any condensed morphism from the corresponding group $W_{K,2}$ to this larger Langlands dual group where we use this notation to denote the two-fold covering group of $W_{K}$ for instance by either pullback along the map $G_{K,2}\rightarrow G_K$ or by considering the corresponding closure of the group $W_K$ in $G_{K,2}$, we conjecture there is certain geometric objects on the other side of the Langlands correspondence which will functorially extend the usual objects in \cite{FS}, i.e. the Schur-irreducible sheaves over the moduli $v$-stack of the corresponding vector bundles over Fargues-Fontaine curves.
\end{conjecture}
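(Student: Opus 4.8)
The plan is to transport the Fargues--Scholze package of \cite{FS} along the two-fold covering datum, organizing everything around three objects: the automorphic side $\mathrm{Bundle}_{G,2}$, whose geometric foundations (a small Artin $v$-stack) are already supplied by the first theorem above and hence support the six-functor formalism and the category $D(\mathrm{Bundle}_{G,2})_{\text{lisse},\blacksquare,A}$; the spectral side, namely the moduli stack $\mathrm{Stack}_{W_{K,2},G^\text{Lan}}$ of condensed morphisms $W_{K,2}\to G^\text{duali\'e}\overset{\mathrm{right}}{\times}W_K$ lying over the covering $W_{K,2}\to W_K$, which I would first identify as an Artin stack of the same type as the Dat--Helm--Kurinczuk--Moss stack of Langlands parameters used in \cite{FS}, \cite{DHKM}; and the geometric Bernstein center that will mediate between them. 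The first concrete step is to understand the covering at the level of parameter spaces: pullback along $G_{K,2}\to G_K$ --- equivalently, restriction along the closure of $W_K$ inside $G_{K,2}$ --- produces a closed immersion of $\mathrm{Stack}_{W_K,G^\text{Lan}}$ into $\mathrm{Stack}_{W_{K,2},G^\text{Lan}}$ whose open complement parameterizes the genuinely mixed-parity morphisms that do not factor through $W_K$. This closed immersion is the precise sense in which the geometric objects to be constructed must \emph{functorially extend} those of \cite{FS}.

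Next I would build up the automorphic side. Following \cite{FS}, \cite{VLa}, I would equip $\mathrm{Bundle}_{G,2}$ with its Harder--Narasimhan/Kottwitz stratification indexed by $B(G)$, show that the open semistable substack is a $\mu_2$-gerbe over the classifying stack $[\ast/\underline{G(K)}]$ coming from the covering datum, and thereby recognize the relevant heart of $D(\mathrm{Bundle}_{G,2})_{\text{lisse},\blacksquare,A}$ on that stratum as (a solidification of) the category of modules over a covering group algebra of $G(K)$; the Schur-irreducible sheaves provided by the second and third theorems above then restrict there to the mixed-parity smooth representations, while on the non-semistable strata the charts $\mathrm{Bundle}_{G,2}^{b}$ and the associated parabolic inductions behave as in \cite{FS} because the covering is concentrated on the basic locus. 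The Robba--Frobenius-sheafified variant with coefficients in $\widetilde{C}_\blacksquare$ is handled in parallel, replacing \'etale sheaves by the Kedlaya--Liu Frobenius-module picture of \cite{KLI}, \cite{KLII}.

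The core of the argument is the spectral action. I would construct Hecke operators on $D(\mathrm{Bundle}_{G,2})_{\text{lisse},\blacksquare,A}$ via the local and global Hecke stacks of $\mathrm{Bundle}_{G,2}$, feeding in a geometric Satake equivalence; the subtle point is that the two-fold covering twists the Satake category, so the dual that appears is exactly $G^\text{duali\'e}$ carrying the extra right $W_K$-action recorded in $G^\text{Lan}$, and the fusion product must be set up metaplectically in the spirit of Gaitsgory--Lysenko but now over the Fargues--Fontaine curve and inside the world of $v$-stacks. From the Hecke action I would assemble excursion operators and, following the argument of \cite{FS} essentially verbatim, show that they define a map from $\mathcal{O}(\mathrm{Stack}_{W_{K,2},G^\text{Lan}})$ to the geometric Bernstein center of $\mathrm{Bundle}_{G,2}$; the induced spectral decomposition of $D(\mathrm{Bundle}_{G,2})_{\text{lisse},\blacksquare,A}$ over $\mathrm{Stack}_{W_{K,2},G^\text{Lan}}$ attaches to each condensed morphism $\varphi\colon W_{K,2}\to G^\text{Lan}$ the fiber category at the corresponding point, and any Schur-irreducible object of that fiber is the sought-for geometric object. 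Compatibility with \cite{FS} is then forced by the commutativity of the whole Hecke/excursion formalism with the pair of maps $\mathrm{Bundle}_G\to\mathrm{Bundle}_{G,2}$ and $\mathrm{Stack}_{W_K,G^\text{Lan}}\hookrightarrow\mathrm{Stack}_{W_{K,2},G^\text{Lan}}$ from the first paragraph.

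The hardest part, and the reason the statement is recorded here as a conjecture, is precisely this metaplectic geometric Satake over the Fargues--Fontaine curve together with the proof that the excursion operators generate enough of the geometric Bernstein center to pin down $\varphi$: the topologization and analytification obstructions flagged above, and the need to control the $\mu_2$-gerbe uniformly across all Harder--Narasimhan strata, both stand in the way of a literal transcription of \cite{FS}. A secondary but still genuine difficulty is to verify that the resulting parameterization is independent of the two presentations of $W_{K,2}$ --- pullback along $G_{K,2}\to G_K$ versus the closure of $W_K$ in $G_{K,2}$ --- and to identify exactly which class of mixed-parity smooth representations is matched; I expect these to reduce to a compatibility computation on $\mathrm{Stack}_{W_{K,2},G^\text{Lan}}$ rather than to require any new idea.
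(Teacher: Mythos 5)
The statement you are addressing is recorded in the paper as a \emph{conjecture}, and the paper never proves it; immediately after stating it the author only remarks that it expresses the hope of extending \cite{FS} functorially. So there is no proof to match your proposal against, only the partial progress of the subsequent chapter. That partial progress follows the same skeleton as your first and third paragraphs: realize $\mathrm{Bundle}_{G,2}$ as a fiber product over $\mathrm{Bundle}_G$ along $\mathrm{Stack}_{\mathrm{FF},2}\to\mathrm{Stack}_{\mathrm{FF}}$ (small Artin $v$-stack, charts pulled back from the $B(G)$-charts of \cite{FS}), define the Hecke stacks by pullback, and obtain $W_{K,2}$-parameterizations of Schur-irreducible objects by discretizing $W_{K,2}$ and feeding it into the V.~Lafforgue excursion-operator/Bernstein-center formalism of \cite[Chapter VIII Theorem 4.1, Chapter IX Proposition 4.1]{FS}. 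To that extent your strategy is the paper's strategy, and you correctly identify the Bernstein center as the mediating object.

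Two substantive divergences are worth flagging. First, the paper's treatment of Satake in the mixed-parity setting is deliberately minimal: it \emph{pulls back} the usual Satake sheaf $\mathcal{F}_R$ along $\mathrm{Stack}_{\mathrm{Hecke},G,2,I}\to\mathrm{Stack}_{\mathrm{Hecke},G,I}$, keeping the dual group $G^{\text{duali\'e}}$ unchanged and enlarging only the Galois side from $W_K$ to $W_{K,2}$ (the covering lives on the curve, via $t^{1/2}$, not on the loop group). Your proposal instead posits a metaplectically twisted Satake category in the spirit of Gaitsgory--Lysenko, which would modify the dual group itself; that is a genuinely different (and stronger) hypothesis than what the paper's construction requires, and you should either justify why the $t^{1/2}$-covering induces such a twist or drop it in favour of the pullback formalism. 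Second, your spectral-side stack $\mathrm{Stack}_{W_{K,2},G^{\text{Lan}}}$, the claimed closed immersion from the $W_K$-parameter stack, and the spectral decomposition over it are not constructed anywhere in the paper --- the paper defers all of this to a closing remark citing \cite{DHKM}, \cite{Z} and explicitly labels the spectral action as conjectural. Your honest identification of the unproven steps (enough excursion operators, uniform control over the strata, independence of the two presentations of $W_{K,2}$) is consistent with the paper's decision to leave the statement as a conjecture.
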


\indent This conjecture is literally saying that we can extend the picture in \cite{FS} in certain functorial way, which is essentially following the original Langlands functoriality conjectures. 

\begin{definition}
Now introduce the corresponding main objects in our setting. First we consider the corresponding two fold covering of the usual Fargues-Fontain curves in the following sense. We consider the category of all the perfectoid spaces over 
\begin{align}
\mathrm{Spd}\overline{\mathbb{F}}_p\otimes_{\mathbb{F}_p}\mathbb{Q}_p(\mu_{p^\infty})^{\wedge,\flat}.
\end{align}
This amounts to saying that all the small $v$-stacks in the following over this site will be taking the form of:
\begin{align}
\widetilde{X}\times_{\mathrm{Spd}\overline{\mathbb{F}}_p}\mathrm{Spd}\overline{\mathbb{F}}_p((k^{1/p^\infty}))
\end{align}
which is some \textit{smooth} base change from some $\widetilde{X}$. Here the functor $\mathrm{Spd}$ parametrizes the corresponding untilts of the corresponding perfectoid spaces in the equal characteristics. Let us recall that from \cite{SchI}, \cite{KLI}, \cite{KLII} this means that we have certain map from $Witt_{K}(X^\flat) \rightarrow X^\sharp$ which locally presents a principal kernel where the kernel is Cartier divisor generated by certain elememt $t$, this $t$ can be used to take further filtered completion of this generalized Witt vector ring which further produces the corresponding de Rham rings. For instance in the situation where $K$ is of mixed characteristic, we can take the any Tate perfectoid algebra $R$, over some $\mathbb{F}_p((\overline{z}))$, and then we have $t$ can be chosen to be $\log([\overline{z}+1])$, where $\varphi$ acts by some uniformizer and the Galois group of $K$ acts via $\mathcal{O}_K$ through cyclotomic character. We can then join the square root of $t$ in both these two stages, with the corresponding same notation $t^{1/2}$, which produces further extension of the de Rham rings and the corresponding Robba rings with each untilts. Then we take a finite extension of the ground field to allow the action $\varphi$ on this square root for $K$. Then we have the corresponding two-fold covering FF-stack $\mathrm{Stack}_{\mathrm{FF},2}$ with morphism:
\begin{align}
\mathrm{Stack}_{\mathrm{FF},2}\rightarrow \mathrm{Stack}_{\mathrm{FF}}.
\end{align}
Then we consider the moduli stack of vector bundles over these FF-curves. We use the notation $\mathrm{Bundle}_{G,2}$ to denote the $v$-prestack over the category (endowed with $v$-topology) of all the perfectoid spaces in the above: $\mathrm{PEF}_\mathrm{\mathrm{Spd}\overline{\mathbb{F}}_p}$ parametrizing the groupoid of the corresponding $G$-bundles over the $\mathrm{Stack}_{\mathrm{FF},2}$.
\end{definition}

\indent We then have the following key results in our setting closely after \cite[Chapter III, Chapter IV, Chapter V]{FS}:

\begin{theorem}
The $v$-prestack as above $\mathrm{Bundle}_{G,2}$ is actually $v$-stack, which is furthermore small as in \cite{FS}, and is furthermore Artin stack in the category of all the $v$-stacks.
\end{theorem}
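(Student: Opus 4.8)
The plan is to transport the corresponding theorems of \cite[Chapters III--V]{FS} along the finite covering morphism
\begin{align}
\pi\colon\mathrm{Stack}_{\mathrm{FF},2}\longrightarrow\mathrm{Stack}_{\mathrm{FF}},
\end{align}
checking at each step that the constructions involved are insensitive to replacing the relative Fargues--Fontaine curve by its two-fold cover. After the finite extension of the ground field made in the definition above so that $\varphi$ acts on $t^{1/2}$, the morphism $\pi$ is a finite flat morphism of relative curves over each test perfectoid space $S$, generically finite \'etale and at worst tamely ramified along the divisor $t=0$; in particular pullback along $\pi$ and pushforward (Weil restriction) along $\pi$ are available and exact enough to compare $G$-bundles on $\mathrm{Stack}_{\mathrm{FF},2}$ with $G$- and $\mathrm{Res}_\pi G$-bundles on $\mathrm{Stack}_{\mathrm{FF}}$, which gives a useful dictionary even if one does not want to rely on it in full.

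First I would treat the $v$-descent and smallness, which are formal. By \cite{KLI}, \cite{KLII} and \cite[Chapter II, Chapter III]{FS}, vector bundles, hence $G$-bundles, on a relative curve satisfy descent along $v$-covers of perfectoid spaces; this statement concerns the relative curve over the varying test object $S$ and is unchanged by the base change to $\mathrm{Spd}\,\overline{\mathbb{F}}_p\otimes_{\mathbb{F}_p}\mathbb{Q}_p(\mu_{p^\infty})^{\wedge,\flat}$ and by passage to the finite cover. Hence $\mathrm{Bundle}_{G,2}$ is a $v$-stack. Smallness then follows as in \cite{FS}: the relevant cardinality bound is obtained by presenting $\mathrm{Bundle}_{G,2}$, up to quotients, by qcqs pieces coming from moduli of modifications of a fixed bundle, and the degree-$2$ cover only multiplies the ranks of the bundles of sections by a bounded factor, so no set-theoretic obstruction is introduced.

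For the Artin property I would verify the criterion of \cite[Chapter IV]{FS}. The diagonal $\mathrm{Bundle}_{G,2}\to\mathrm{Bundle}_{G,2}\times\mathrm{Bundle}_{G,2}$ is representable in locally spatial diamonds and locally separated because, for $G$-bundles $\mathcal{E},\mathcal{E}'$ over $\mathrm{Stack}_{\mathrm{FF},2,S}$, the sheaf $\underline{\mathrm{Isom}}(\mathcal{E},\mathcal{E}')$ is a torsor under (an open subsheaf of) the Banach--Colmez space attached to the sections of the internal $\mathrm{Hom}$ bundle on the relative cover, which is a locally spatial diamond by the theory of Banach--Colmez spaces in \cite[Chapter II]{FS}; replacing the curve by the finite cover only replaces one such bundle by its $\pi_*$, which remains a vector bundle on $\mathrm{Stack}_{\mathrm{FF},S}$, so the conclusion is unaffected. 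To produce a cohomologically smooth surjection from a locally spatial diamond I would use the Newton stratification of $\mathrm{Bundle}_{G,2}$ indexed by $B(G)$: each stratum should be of the form $[\ast/\widetilde{G}_b]$ for a group diamond $\widetilde{G}_b$ which is an extension of the locally profinite group $\underline{G_b(K)}$ by a positive, cohomologically smooth Banach--Colmez group; the stratification should be semicontinuous on $|\mathrm{Bundle}_{G,2}|$; and the open semistable stratum together with the Beauville--Laszlo uniformization by the moduli of modifications of a fixed bundle should provide a chart that is a locally spatial diamond, cohomologically smooth and surjective onto $\mathrm{Bundle}_{G,2}$. This is the \cite{FS} machinery carried out on $\mathrm{Stack}_{\mathrm{FF},2}$.

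The main obstacle is precisely this last point: one must check that the Harder--Narasimhan/Newton formalism of \cite{FS} survives on the two-fold cover, i.e. that the slopes and the groups $\widetilde{G}_b$ are well defined and cohomologically smooth after adjoining $t^{1/2}$ and the attendant finite extension making $\varphi$ act, and in particular that the ramification of $\pi$ along $t=0$ does not destroy cohomological smoothness of the Banach--Colmez charts or the semicontinuity of the stratification. My expectation is that tameness of this ramification (guaranteed by the field extension in the definition) suffices for the diamond-theoretic smoothness arguments of \cite[Chapter II, Chapter IV]{FS} to go through with only bookkeeping changes; making this precise --- rather than the descent and the diagonal, which are essentially formal --- is where the real work lies.
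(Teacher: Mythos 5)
Your plan is sound and reaches the same conclusion, but it takes a genuinely different route from the paper on the substantive point, namely the Artin property. The paper's proof leans on the observation that $\mathrm{Bundle}_{G,2}$ sits in a commutative square over $\mathrm{Stack}_{\mathrm{FF},2}\to\mathrm{Stack}_{\mathrm{FF}}$ realizing it as a fiber product against $\mathrm{Bundle}_{G}$, and then simply transports everything along the projection $\mathrm{Bundle}_{G,2}\to\mathrm{Bundle}_{G}$: smallness and the $v$-stack property are inherited from \cite[Chapter III]{FS} through this presentation, and the smooth charts are obtained by pulling back the Fargues--Scholze charts $\mathrm{Stack}_{b}\to\mathrm{Bundle}_{G}$ to charts $\mathrm{Stack}_{b,2}\to\mathrm{Bundle}_{G,2}$, with the remark that $B(G)$ is unchanged. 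You instead rebuild the \cite{FS} machinery intrinsically on the two-fold cover: descent of bundles on the covering curve, the diagonal via Banach--Colmez spaces, and a Newton stratification with groups $\widetilde{G}_b$ and a semistable chart constructed directly on $\mathrm{Stack}_{\mathrm{FF},2}$. The paper's route is shorter and sidesteps the Harder--Narasimhan question you correctly isolate as the real difficulty, but it silently assumes that the morphism $\mathrm{Bundle}_{G,2}\to\mathrm{Bundle}_{G}$ is representable in a class of objects for which pulling back a cohomologically smooth chart from a locally spatial diamond again yields such a chart; that is essentially equivalent to the content you propose to verify by hand. Your route is heavier but self-contained and yields finer output (the stratification and the groups $\widetilde{G}_b$ on the cover), which the pullback argument does not by itself provide; your honest flagging of the tame-ramification/cohomological-smoothness issue at $t=0$ identifies a gap that the paper's own proof does not address at all. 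Either route is acceptable at the level of rigor of the source; if you want the quicker argument, add the fiber-product description and justify representability of $\mathrm{Bundle}_{G,2}\to\mathrm{Bundle}_{G}$, and your Newton-stratification analysis then becomes a corollary rather than a prerequisite.
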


\begin{proof}
We first mention that the corresponding stack here represents the corresponding morphism:
\begin{align}
\mathrm{Bundle}_{G,2} \rightarrow \mathrm{Bundle}_{G}
\end{align}
over the morphism in between the corresponding morphism of the stacks:
\begin{align}
\mathrm{Stack}_{\mathrm{FF},2}\rightarrow \mathrm{Stack}_{\mathrm{FF}}.
\end{align}
This consideration directly elaborates a corresponding commutative diagram which realizes the corresponding prestack $\mathrm{Bundle}_{G,2}$ as a corresponding fiber product of the corresponding stack $\mathrm{Bundle}_{G}$ with the stack $\mathrm{Stack}_{\mathrm{FF},2}$ directly. This will imply directly the corresponding smallness of our stack, which certainly can be proved by using the corresponding argument in \cite[Proposition 1.3 in Chapter III]{FS}, since that argument in itself can be formalized in our setting. To prove that our $v$-prestack is a stack one can proceed again in these two way fashion. One way is to consider the corresponding fiber product as above for the stack $\mathrm{Bundle}_{G,2}$ while the other way is to consider the corresponding argument as in the following following \cite{FS}. Namely we look at the derived higher order direct section functor where all the complexes over the extended Robba rings, while we can actually regard those sheaves as the corresponding sheaves over the usual Kedlaya-Liu Robba sheaves in the perfect setting, which will produce directly $v$-descent for the complexes of sheaves. This proves directly that the $v$-presheaf of moduli of $G$-bundles is in our setting corresponding $v$-stack as in \cite{FS}. For the property of being Artin, we consider the following two way approaches. First observe the following. The Kottwitz set $B(G)$ actually makes sense in our setting, which remains the same. One should be able to use this directly to construct the corresponding smooth chart in the definition of the corresponding Artin $v$-stack in the corresponding mixed-parity setting. However we can consider the usual chart $\mathrm{Stack}_b\rightarrow \mathrm{Bundle}_{G}$ to take the correpsonding fiber-product along the corresponding morphism:
\begin{align}
\mathrm{Bundle}_{G,2} \rightarrow \mathrm{Bundle}_{G}.
\end{align}
This produces certain chart in our setting:
\begin{align}
\mathrm{Stack}_{b,2}\rightarrow \mathrm{Bundle}_{G,2}.
\end{align}
Here $\mathrm{Bundle}_G$ is the key stack in \cite{FS} over the $v$-site of all the Tate perfectoid over
\begin{align}
\mathrm{Spd}\overline{\mathbb{F}}_p\otimes_{\mathbb{F}_p}\mathbb{Q}_p(\mu_{p^\infty})^{\wedge,\flat},
\end{align}
where equivalently one can take the moduli in \cite{FS} over $\overline{\mathbb{F}}_p$ and take the fiber product of this moduli over $\overline{\mathbb{F}}_p$ with $\mathrm{Spd}\overline{\mathbb{F}}_p((k^{1/p^\infty}))$, together with the smooth charts.
\end{proof}

\begin{remark}
Let us elaborate slightly more on the subtle point here. The corresponding mechanism allows us to consider the corresponding stability of $G$-bundles in our generalized setting, but in that way the whole mechanism looks a bit tedious and tricky if you look at the corresponding discussion in the usual situation and then to directly mimick. Therefore our strategy (in certain well-defined situation) is to consider the corresponding pull back of the semistable, and more sublocus inside our stack $\mathrm{Bundle}_{G,2}$. For instance we have the following definition.
\end{remark}

\begin{definition}
We define the mixed-parity version of the usual de Rham Grassmannian in our situation. Again over the category $\mathrm{PEF}_{\mathrm{\mathrm{Spd}}\overline{\mathbb{F}}_p((k^{1/p^\infty}))}$. Over this category each until will define certain Cartier divisor of rank 1 in the flavor of perfectoid spaces after Kedlaya-Liu and Scholze. The corresponding generalized Fontaine-Wintenberger correspondence procuces the map:
\begin{align}
Witt_K(X^\flat) \rightarrow X^\sharp
\end{align}
which further gives rise to the definition of the distinguished element $t$. One can call this the corresponding Cartier divisorial distinguished deformation in the generalized Fontaine-Wintenberger fashion. Then take the corresponding filtered completion with respect to $t$ we have the corresponding definition of the de Rham sheaves in our setting after we consider largen this sheaves by adding the square root of $t$. Then we can define as in \cite{FS} the corresponding $B_\mathrm{dR,2}$-Grassmannian $v$-stack, which can be directly defined as the corresponding pull-back along the corresponding morphism of small Artin $v$-stacks:
\begin{align}
\mathrm{Bundle}_{G,2} \rightarrow \mathrm{Bundle}_{G}.
\end{align}
Or one can follow the definition in \cite{FS} to give the definition by using the corresponding $B_\mathrm{dR,2}$-lattices, and the corresponding $B_\mathrm{dR,2}$-loop groups. \footnote{Be careful that here we are using the ring $\mathbb{C}_p[[k^{1/2}]][k^{-1/2}]\otimes *$, where $*$ is some finite extension of $\mathbb{Q}_p$ containing the corresponding square root of $\varphi(k^{1/2})$ and realizing the corresponding two fold covering of the local Galois group $G_{K,2}$. Whenever we use this notation we will assume the existence of $*$ as in \cite{BS}.}
\end{definition}

\begin{remark}
Be careful that here we are using the ring $\mathbb{C}_p[[k^{1/2}]][k^{-1/2}]\otimes *$, where $*$ is some finite extension of $\mathbb{Q}_p$ containing the corresponding square root of $\varphi(k^{1/2})$ and realizing the corresponding two fold covering of the local Galois group $G_{K,2}$. Whenever we consider joining the corresponding element $t^{1/2}$ to the corresponding period sheaves such as the de Rham or the Robba we will assume the existence of $*$ as in \cite{BS}. In more general setting and in the corresponding equal-characteristic setting we do have the corresponding same requirement namely not over this geometric point.

\end{remark}

\noindent Then we can consider the key derived $\infty$-categories in our setting following the corresponding \cite{FS}. Fargues-Scholze's development is heavily focus on the construction of these categories which eventually emerges in the corresponding Bernstein center construction after \cite{VLa} (while Lafforgue's result is extremely formalized for any $\mathbb{Z}_\ell$-category).

\begin{definition}
For any $\mathbb{Z}_\ell$-algebra $A$, after \cite{FS} we define directly (under our current context and the verifications) the following derived $\infty$-categories:
\begin{align}
&D(\mathrm{Bundle}_{G,2})_{\text{etale},A},\\
&D(\mathrm{Bundle}_{G,2})_{\text{lisse},\blacksquare,A},\\
&D(\mathrm{Bundle}_{G,2})_{\blacksquare,A}.
\end{align}
\end{definition}

\begin{theorem}
These categories:
\begin{align}
&D(\mathrm{Bundle}_{G,2})_{\text{etale},A},\\
&D(\mathrm{Bundle}_{G,2})_{\text{lisse},\blacksquare,A},\\
&D(\mathrm{Bundle}_{G,2})_{\blacksquare,A}
\end{align}
are well-defined.
\end{theorem}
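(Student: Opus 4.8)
The plan is to reduce every assertion to the constructions of \cite[Chapter VII]{FS}, using the two descriptions of $\mathrm{Bundle}_{G,2}$ already established: that it is a small Artin $v$-stack, and that it fits in a Cartesian square over $\mathrm{Stack}_{\mathrm{FF},2}\rightarrow\mathrm{Stack}_{\mathrm{FF}}$ realizing a morphism $\pi\colon \mathrm{Bundle}_{G,2}\rightarrow\mathrm{Bundle}_G$. Here ``well-defined'' is to be understood as three things at once: $(\mathrm{i})$ the defining limits --- over the $v$-site of the stack, respectively over a $v$-hypercover by smooth charts --- converge and produce presentable stable $\infty$-categories; $(\mathrm{ii})$ the outcome is independent of the chosen chart or presentation; $(\mathrm{iii})$ the $\text{etale}$, $\text{lisse}$-$\blacksquare$ and solid versions sit in the expected comparison diagram of fully faithful embeddings and localizations, so that the notation is unambiguous.

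First I would treat $D(\mathrm{Bundle}_{G,2})_{\text{etale},A}$. Because $\mathrm{Bundle}_{G,2}$ is a small $v$-stack, the assignment $U\mapsto D_{\text{etale}}(U,A)$ on the relevant $v$-site of perfectoid test spaces over $\mathrm{Spd}\,\overline{\mathbb{F}}_p((k^{1/p^\infty}))$ is a sheaf of $\infty$-categories by \cite[Chapter VII]{FS}, and one sets $D(\mathrm{Bundle}_{G,2})_{\text{etale},A}$ to be the limit of the $D_{\text{etale}}(U,A)$ over all maps $U\rightarrow\mathrm{Bundle}_{G,2}$; smallness guarantees this limit is a presentable stable $\infty$-category insensitive to the presentation. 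Equivalently one may pull back along $\pi$: since $\mathrm{Stack}_{\mathrm{FF},2}\rightarrow\mathrm{Stack}_{\mathrm{FF}}$ is a smooth base change of finite-covering type, $\pi$ is cohomologically smooth, so $\pi^{*}$ exists and exhibits $\pi^{*}D(\mathrm{Bundle}_G)_{\text{etale},A}$ as a full subcategory --- this is the source of the functorial containment asserted in the companion theorems.

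For the solid and $\text{lisse}$-$\blacksquare$ versions I would use the Artin property together with the smooth charts $\mathrm{Stack}_{b,2}\rightarrow\mathrm{Bundle}_{G,2}$, $b\in B(G)$, constructed above by base-changing the Harder--Narasimhan charts of \cite{FS} along $\pi$. These charts are again cohomologically smooth and surjective, and each $\mathrm{Stack}_{b,2}$ is built from a classifying stack $[\,\ast/\widetilde{\mathcal{G}}_b\,]$ for a group $v$-sheaf $\widetilde{\mathcal{G}}_b$ that is a finite cover, respectively extension, of the $\mathcal{G}_b$ of \cite{FS}; for such objects $D_\blacksquare(\mathrm{Stack}_{b,2},A)$ and $D_{\text{lisse}}(\mathrm{Stack}_{b,2},A)$ are produced in \cite[Chapter VII]{FS}, and one glues along the $v$-hypercover $\{\mathrm{Stack}_{b,2}\}_{b\in B(G)}$ by checking $\ast$-descent for the solid sheaves and invoking smooth base change and the projection formula for the transition functors, exactly as in \cite{FS}. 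Independence of the chart follows because any two charts are dominated by a common refinement and the comparison functors become equivalences after descent; the comparison diagram of $(\mathrm{iii})$ is then inherited chart by chart from the corresponding diagram over $\mathrm{Bundle}_G$.

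The main obstacle I anticipate is verifying that the charts $\mathrm{Stack}_{b,2}$ --- and more generally the morphism $\pi$ --- are genuinely cohomologically smooth with $A$-coefficients, since that is precisely what makes the solid six-functor formalism descend. The two-fold covering contributes a $\mu_2$-flavoured factor, and at the level of period sheaves it involves adjoining $t^{1/2}$ together with the auxiliary finite extension $\ast$ of \cite{BS}; one has to check that these enlargements are finite-\'etale, or pro-finite-\'etale, or at worst cohomologically smooth, so that smoothness is preserved and nothing degenerates in residue characteristic. This is why $p>2$ is imposed and why the construction is run over the smooth base $\mathrm{Spd}\,\overline{\mathbb{F}}_p((k^{1/p^\infty}))$ rather than over the geometric point. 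Granting this essentially local computation on the charts, which reduces to the analogous statement in \cite{FS} together with the behaviour of a degree-$2$ cover, the rest is a formal unwinding of \cite[Chapter VII]{FS}.
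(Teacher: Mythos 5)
Your proposal follows essentially the same route as the paper, whose entire proof is the one-line statement that one applies the machinery of \cite[Chapter V, Chapter VII]{FS} to the stack $\mathrm{Bundle}_{G,2}$; your version simply spells out what that application consists of (limits over the $v$-site for the \'etale category, descent along the base-changed Harder--Narasimhan charts for the solid and lisse versions). The one technical point you isolate --- cohomological smoothness of the charts $\mathrm{Stack}_{b,2}$ after adjoining $t^{1/2}$ and the auxiliary extension $\ast$ --- is not addressed in the paper at all, so your account is strictly more careful than the source.
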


\begin{proof}
We apply the corresponding heavy mechinery in \cite[Chapter V, Chapter VII]{FS} to the stack $\mathrm{Bundle}_{G,2}$.
\end{proof}

Now we can state our main theorem in the corresponding context as in the following:

\begin{theorem}
For any finite extension $K$ over $\mathbb{Q}_p$ or $\mathbb{F}_q((z))$ with $\ell\neq p$, $p>2$ we have well-defined Schur-irreducible sheaves in the corresponding derived $\infty$-category $D(\mathrm{Bundle}_{G,2})_{\text{lisse},\blacksquare,A}$ which are containing all the correspnding Schur-irreducible sheaves in the corresponding derived $\infty$-category $D(\mathrm{Bundle}_{G})_{\text{lisse},\blacksquare,A}$ in the usual setting in \cite{FS} in certain functorial manner, while the latters are related directly to the corresponding smooth representations of the corresponding reductive group $G(K)$ by considering the Bernstein center.
\end{theorem}

\begin{proof}
Putting all we have proved in this section we have the theorem follows.
\end{proof}

\noindent The corresponding $v$-sheaves consideration can be actually generalized to the corresponding Robba and Frobenius sheaves after Kedlay-Liu in \cite{KLII} and \cite{KLI}. Here we remind the reader that the one can have many ways to do the corresponding generalization. There is one way to consider the corresponding directly Grothendieck-categoricalization of the corresponding pseudo-coherent sheaves, which produces the corresponding Grothendieck categories which can be enhanced to be the corresponding derived $\infty$-category of all the corresponding \textit{quasi-coherent sheaves with Frobenius structure over Kedlaya-Liu Robba sheaf $\widetilde{C}_\blacksquare$}. However the other approach is to use the corresponding solid \textit{quasi-coherent sheaves with Frobenius structure over Kedlaya-Liu Robba sheaf $\widetilde{C}_\blacksquare$} after Clausen-Scholze, which is what we adopt.

\begin{definition}
After \cite{FS} we define directly (under our current context and the verifications) the following derived $\infty$-categories of all the solid quasi-coherent sheaves with Frobenius structure over Kedlaya-Liu Robba sheaf $\widetilde{C}_\blacksquare$:
\begin{align}
D(\mathrm{Bundle}_{G,2})_{\text{KL},\blacksquare,\widetilde{C}_\blacksquare},\\
\end{align}
with the well-defined sub category of all the lisse complexes objects generated from the locally free objects which are finitely generated (which are automatically condensed solid), which we denote it as:
\begin{align}
D(\mathrm{Bundle}_{G,2})_{\text{KL},\mathrm{lisse},\blacksquare,\widetilde{C}_\blacksquare},\\
\end{align}
\end{definition}

\begin{theorem}
These categories:
\begin{align}
&D(\mathrm{Bundle}_{G,2})_{\text{KL},\blacksquare,\widetilde{C}_\blacksquare},\\
&D(\mathrm{Bundle}_{G,2})_{\text{KL},\mathrm{lisse},\blacksquare,\widetilde{C}_\blacksquare}
\end{align}
are well-defined.
\end{theorem}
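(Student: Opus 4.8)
The plan is to reduce the well-definedness to the corresponding statements over the two-fold covering FF-stack $\mathrm{Stack}_{\mathrm{FF},2}$ and then to descend along the smooth chart $\mathrm{Stack}_{b,2}\rightarrow \mathrm{Bundle}_{G,2}$ produced in the Artin-ness theorem above. First I would fix, over each untilt in $\mathrm{PEF}_{\mathrm{Spd}\overline{\mathbb{F}}_p}$, the extended Kedlaya-Liu Robba sheaf $\widetilde{C}_\blacksquare$ obtained by adjoining $t^{1/2}$ to the usual Robba sheaf of \cite{KLI}, \cite{KLII}, and record that, after the finite base extension $*$ allowing $\varphi$ to act on $t^{1/2}$ (existence assumed as in \cite{BS}), this adjunction is a finite flat — indeed locally finite free — extension of analytic rings in the sense of \cite{CSI}, \cite{CSII}. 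In particular $\widetilde{C}_\blacksquare$ is again a solid analytic ring with the idempotency and completeness needed to run the Clausen-Scholze solid six-functor formalism, so the assignment sending an affinoid perfectoid test object to the stable presentable symmetric monoidal $\infty$-category of solid quasi-coherent $\widetilde{C}_\blacksquare$-modules equipped with Frobenius structure is well-defined pointwise.

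The second step is $v$-descent. Here I would invoke the descent statement for quasi-coherent (resp. pseudo-coherent) modules over Robba sheaves in the perfect setting exactly as in \cite[Chapter II]{FS} via \cite{KLI}, \cite{KLII}, and observe that both the Frobenius structure and the passage to $t^{1/2}$ are compatible with the derived higher direct images along a $v$-cover; this is precisely the argument used in the Artin-ness theorem above, realizing complexes over the extended Robba sheaves as complexes over the usual Kedlaya-Liu Robba sheaves, and it applies verbatim. This yields a $v$-sheaf of $\infty$-categories $D(-)_{\mathrm{KL},\blacksquare,\widetilde{C}_\blacksquare}$ on $\mathrm{PEF}_{\mathrm{Spd}\overline{\mathbb{F}}_p}$; evaluating on the fiber product $\mathrm{Bundle}_{G,2}=\mathrm{Bundle}_G\times_{\mathrm{Stack}_{\mathrm{FF}}}\mathrm{Stack}_{\mathrm{FF},2}$ and gluing along $\mathrm{Stack}_{b,2}\rightarrow \mathrm{Bundle}_{G,2}$ (using smooth descent, which reduces to $v$-descent after passing to a $v$-chart of the Banach-Colmez charts as in \cite[Chapter V]{FS}) produces $D(\mathrm{Bundle}_{G,2})_{\mathrm{KL},\blacksquare,\widetilde{C}_\blacksquare}$. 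For the lisse version I would then define it as the full subcategory generated under finite colimits, retracts, and shifts by those objects whose pullback to the chart $\mathrm{Stack}_{b,2}$ is a finitely generated locally free $\widetilde{C}_\blacksquare$-module with Frobenius structure; such objects are automatically solid and dualizable, so the subcategory is stable and rigid, its formation commutes with the transition maps of the $v$-sheaf, and it is therefore independent of the chart — which is the content of being well-defined. The comparison inclusion from $D(\mathrm{Bundle}_G)_{\mathrm{KL},\mathrm{lisse},\blacksquare,\widetilde{C}_\blacksquare}$ is by pullback along $\mathrm{Bundle}_{G,2}\rightarrow \mathrm{Bundle}_G$.

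The hard part will be step two in the presence of ramification: adjoining $t^{1/2}$ is not étale, so one must check that the Robba-Frobenius descent of \cite{KLI}, \cite{KLII} is genuinely insensitive to this finite ramified extension of the structure sheaf, i.e. that base change from the usual Robba sheaf to $\widetilde{C}_\blacksquare$ preserves the relevant acyclicity and the exactness of the glueing squares in the solid setting of \cite{CSI}, \cite{CSII}. I expect this to follow from finite flatness together with exactness of the solid tensor product on the class of complete modules involved, but it is the point at which the two-fold-covering perturbation could in principle obstruct the construction, and it is where I would concentrate the technical work; the hypothesis $p>2$ enters precisely so that $t^{1/2}$ and the finite extension $*$ behave, and the choice of $\widetilde{C}_\blacksquare$ with the solid quasi-coherent formalism, rather than a Grothendieck-abelian enhancement of pseudo-coherent sheaves, is exactly what keeps the functional-analytic bookkeeping tractable throughout.
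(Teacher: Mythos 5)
Your proposal takes essentially the same route as the paper, whose entire proof is a one-sentence appeal to the machinery of \cite[Chapter V, Chapter VII]{FS} together with \cite{SchIII}, \cite{CSI}, \cite{CSII} applied to the stack $\mathrm{Bundle}_{G,2}$; your pointwise analytic-ring check, $v$-descent step, chart-gluing, and definition of the lisse subcategory are an unpacking of exactly that citation. The one substantive issue you isolate --- whether the Kedlaya--Liu Robba--Frobenius descent of \cite{KLI}, \cite{KLII} survives the ramified adjunction of $t^{1/2}$ in the solid setting --- is not addressed in the paper's proof either, so your identification of it as the locus of the real technical work goes beyond, rather than conflicts with, what the paper establishes.
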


\begin{proof}
We apply the corresponding heavy mechinery in \cite[Chapter V, Chapter VII]{FS} to the stack $\mathrm{Bundle}_{G,2}$. Also we rely on \cite{SchIII}, \cite{CSI}, \cite{CSII}.
\end{proof}

Now we can state our main theorem in the corresponding context as in the following:

\begin{theorem}
For any finite extension $K$ over $\mathbb{Q}_p$ or $\mathbb{F}_q((z))$ with $\ell\neq p$, $p>2$ we have well-defined Schur-irreducible sheaves in the corresponding derived $\infty$-category $D(\mathrm{Bundle}_{G,2})_{\text{KL},\mathrm{lisse},\blacksquare,\widetilde{C}_\blacksquare}$ which are containing all the corresponding Schur-irreducible sheaves in the corresponding derived $\infty$-category $D(\mathrm{Bundle}_{G})_{\text{KL},\mathrm{lisse},\blacksquare,\widetilde{C}_\blacksquare}$ in the usual setting in \cite{FS} in certain functorial manner, while the latters are related directly to the corresponding smooth representations of the corresponding reductive group $G(K)$ by considering the Bernstein center.
\end{theorem}

\begin{proof}
Putting all we have proved in this section we have the theorem follows.
\end{proof}

\begin{remark}
This theorem is actually a double and second-further generalization of the corresponding consideration in \cite{FS}, namely the category $D(\mathrm{Bundle}_{G})_{\text{KL},\mathrm{lisse},\blacksquare,\widetilde{C}_\blacksquare}$ goes along a direct $p$-adic cohomologicalzation of the $\ell$-adic cohomologization perverse enough from \cite{FS}. The generalized category $D(\mathrm{Bundle}_{G,2})_{\text{KL},\mathrm{lisse},\blacksquare,\widetilde{C}_\blacksquare}$ carries the corresponding $W_{K,2}$-equivariance information which is very interesting to study.
\end{remark}

\begin{conjecture}
We have well-defined formalism in this context for the 6-functors in condensed quasi-coherent setting, namely if we have the preservance of the \textit{propert\'e lisse} in our current setting under the pull-back and the push-forward.
\end{conjecture}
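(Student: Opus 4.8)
\emph{Toward the conjecture.} The plan is to establish this in two stages: first construct the six-functor formalism for the assignment $S\mapsto D(S)_{\mathrm{KL},\blacksquare,\widetilde{C}_\blacksquare}$ on the category of small $v$-stacks over $\mathrm{Spd}\,\overline{\mathbb{F}}_p((k^{1/p^\infty}))$ by transporting the abstract machinery underlying \cite[Chapter VII]{FS} into the condensed quasi-coherent framework of \cite{SchIII}, \cite{CSI}, \cite{CSII}; and second verify that the lisse subcategory $D(\mathrm{Bundle}_{G,2})_{\mathrm{KL},\mathrm{lisse},\blacksquare,\widetilde{C}_\blacksquare}$ is stable under $f^*$ unconditionally and under $f_*$ (equivalently $f_!$ up to twist) along cohomologically smooth and cohomologically proper representable morphisms.

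\textbf{Step 1 (abstract six functors).} I would first pin down the symmetric monoidal structure given by $\otimes$ over $\widetilde{C}_\blacksquare$ in the solid sense, together with $f^*$ and $f_*$ for qcqs morphisms; these are already available from the $v$-stack structure established above and from the fact that the Kedlaya--Liu Robba--Frobenius sheaves glue in the analytic topology. I would then run the standard extension procedure: define $f_!$ as $f_*$ on the proper part and as the left adjoint $f_\natural$ on the étale part, glue these along Nagata-type compactifications for locally spatial diamonds, and produce $f^!$ by the adjoint functor theorem, where solidity guarantees the presentability needed for adjointness. Proper base change and the projection formula are then checked on the generating class of morphisms exactly as in \cite{FS}; the only genuinely new input is that the coefficient object now carries a Frobenius and has been enlarged by $t^{1/2}$. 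Here one uses that adjoining $t^{1/2}$ is a finite faithfully flat operation of degree two, and that $\mathrm{Bundle}_{G,2}\to\mathrm{Bundle}_G$ is correspondingly the base change of $\mathrm{Stack}_{\mathrm{FF},2}\to\mathrm{Stack}_{\mathrm{FF}}$, so all finiteness, descent, and base-change statements transport from the usual setting.

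\textbf{Step 2 (preservation of the \emph{propert\'e lisse}).} The pullback direction is essentially formal: a finitely generated locally free $\varphi$-module over $\widetilde{C}_\blacksquare$ pulls back to a finitely generated locally free $\varphi$-module, so $f^*$ visibly preserves the lisse subcategory. The pushforward direction is the substantive one. I would reduce to the charts $\mathrm{Stack}_{b,2}\to\mathrm{Bundle}_{G,2}$ obtained by pulling back $\mathrm{Stack}_b\to\mathrm{Bundle}_G$ along $\mathrm{Bundle}_{G,2}\to\mathrm{Bundle}_G$, using that each $\mathrm{Stack}_{b,2}$ is, up to the degree-two cover, a classifying stack $[\ast/\widetilde{G}_b]$, so that $f_*$ along its structure map is computed by continuous group cohomology. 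Lisseness of the output then follows from the Kedlaya--Liu finiteness theorem for cohomology of $\varphi$-modules over the Robba ring (coherence of the $H^i$, slope filtrations) together with finite cohomological dimension of the relevant groups. The $v$-descent already secured in Step 1 upgrades this from the charts to $\mathrm{Bundle}_{G,2}$ itself.

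\textbf{Main obstacle.} The hard part will be precisely the $f_*$-preservation of lisseness: unlike the $\ell$-adic étale situation of \cite{FS}, the coefficients here are $p$-adic Robba--Frobenius modules, so torsion/finiteness of $\ell$-adic cohomology is unavailable and one must control the solid condensed cohomology of the Robba sheaf directly. The subtleties flagged above — that analytification is not transparently inherited from the algebraic picture, and that the solid structure on quasi-coherent sheaves with Frobenius interacts delicately with $f_!$ — are exactly where a new finiteness input beyond \cite{KLI}, \cite{KLII} may be needed, namely a version of Kedlaya--Liu finiteness valid uniformly in families over the two-fold covering FF-stack. This is why the statement is posed as a conjecture rather than a theorem, and a complete proof would hinge on establishing that uniform finiteness.
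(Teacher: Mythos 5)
The statement you are addressing is posed in the paper as a \emph{conjecture}, and the paper supplies no proof of it whatsoever; there is therefore no argument of the author's to compare yours against. Your submission is, by your own accounting, a research plan rather than a proof: Step 1 asserts that the abstract formalism ``transports'' from \cite[Chapter VII]{FS}, and Step 2 defers the essential point (stability of the lisse subcategory under $f_*$) to a ``uniform finiteness'' statement that you acknowledge is not available. That is an honest and correctly localized diagnosis --- the pullback direction is indeed formal, and the pushforward direction is indeed where the content lies --- but it does not close the conjecture, and you should not present the outline as if Step 1 were already secure.

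Two concrete points where even the outline is over-optimistic. First, the Nagata-style gluing of $f_*$ on the proper part with a left adjoint $f_\natural$ on the \'etale part is a construction calibrated for \'etale torsion coefficients; for solid quasi-coherent sheaves on $v$-stacks the functor $f_\natural$ and the functor $f_!$ do not interchange roles so freely, and the compatibility of the two local definitions on overlaps is precisely one of the open foundational issues the paper is flagging, not a step you may ``run'' as standard. Second, your reduction of $f_*$-preservation to the charts $\mathrm{Stack}_{b,2}\to\mathrm{Bundle}_{G,2}$ computes pushforward \emph{to a point} via group cohomology of $\widetilde{G}_b$, whereas the pushforwards the conjecture actually needs (e.g.\ along $f_B$ from the Hecke stack, which is how the formalism is used in the sequel) are relative over $\mathrm{Bundle}_{G,2}\times \Pi_I\,\mathrm{Stack}_{\mathrm{Cartier},W_{K,2}}$ and do not factor through those charts; $v$-descent lets you check lisseness locally on the \emph{target}, not reduce the \emph{source} to a classifying stack. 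Finally, the passage to the two-fold cover is not a clean degree-two finite flat extension of period rings: the paper's construction also enlarges the coefficient field by an extension $*$ needed to let $\varphi$ act on $t^{1/2}$, so the descent along $\mathrm{Stack}_{\mathrm{FF},2}\to\mathrm{Stack}_{\mathrm{FF}}$ carries an extra coefficient extension that your transport argument silently suppresses.
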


\newpage
\section{Hecke Eigensheaves Formalization}

\noindent In this section we prove the following theorems:

\begin{theorem}
For the corresponding setting (1) and (2) we have the corresponding Hecke stacks $\mathrm{Stack}_{\mathrm{Hecke},G,2,I}$ for any finite set $I$. This Hecke stack can be defined by taking the corresponding pull-back along the corresponding morphism from $\mathrm{Bundle}_{G,2}$ to the corresponding stack $\mathrm{Bundle}_{G}$. We have then the corresponding morphisms in the following:
\begin{align}
f_A: \mathrm{Stack}_{\mathrm{Hecke},G,2,I} \rightarrow \mathrm{Bundle}_{G,2}
\end{align}
with
\begin{align}
f_B: \mathrm{Stack}_{\mathrm{Hecke},G,2,I} \rightarrow \mathrm{Bundle}_{G,2}\times \Pi_I \mathrm{Stack}_{\mathrm{Cartier},W_{K,2}}
\end{align}
with
\begin{align}
f_C: \mathrm{Stack}_{\mathrm{Hecke},G,2,I} \rightarrow \mathrm{Bundle}_{G,2}\times \Pi_I \mathrm{Stack}_{\mathrm{Cartier},W_{K,2}}\rightarrow \mathrm{Bundle}_{G,2}\times \mathrm{Stack}_{\mathrm{classify},\Pi_I W_{K,2}}.
\end{align}
The corresponding Hecke operation in this setting is well-defined. The corresponding image lies in the corresponding $W_{K,2}$-invariant sub-derived $\infty$-category:
\begin{align}
D(\mathrm{Bundle}_{G,2})_{\text{lisse},\blacksquare,A}.
\end{align}
For any representation $R$ of the Langlands group $G^\mathrm{Lan}$ in the coefficient $A$ we define the corresponding Hecke operator $\mathrm{Hecke}(\square)$ in mixed-parity setting by using the categoricalized Fourier-transformation fashion functor by first taking the corresponding pushforward along $f_A$ of any complex then take the condensed tensor product with the sheaf corresponding to $R$ over the Hecke stack, then take the corresponding push-forward along:
\begin{align}
f_B: D(\mathrm{Stack}_{\mathrm{Hecke},G,2,I})_{\text{lisse},\blacksquare,A} \rightarrow D(\mathrm{Bundle}_{G,2}\times \Pi_I \mathrm{Stack}_{\mathrm{Cartier},W_{K,2}})_{\text{lisse},\blacksquare,A}.
\end{align}
Namely we use the notation $\mathrm{Hecke}(\square)$ for:
\begin{align}
\mathrm{pushforward}_{f_B}(\mathrm{pullback}_{f_A}\square\otimes_{\blacksquare,\mathrm{complete}}\mathcal{F}_R).
\end{align}
For the corresponding setting (4) we have the corresponding Hecke stacks $\mathrm{Stack}_{\mathrm{Hecke},G,I}$ for any finite set $I$.We have then the corresponding morphisms in the following:
\begin{align}
f_A: \mathrm{Stack}_{\mathrm{Hecke},G,I} \rightarrow \mathrm{Bundle}_{G}
\end{align}
with
\begin{align}
f_B: \mathrm{Stack}_{\mathrm{Hecke},G,I} \rightarrow \mathrm{Bundle}_{G}\times \Pi_I \mathrm{Stack}_{\mathrm{Cartier},W^\mathrm{Tannakian}_{F_x}}
\end{align}
with
\begin{align}
f_C: \mathrm{Stack}_{\mathrm{Hecke},G,I} \rightarrow \mathrm{Bundle}_{G}\times \Pi_I \mathrm{Stack}_{\mathrm{Cartier},W^\mathrm{Tannakian}_{F_x}}\rightarrow \mathrm{Bundle}_{G}\times \mathrm{Stack}_{\mathrm{classify},\Pi_I W^\mathrm{Tannakian}_{F_x}}.
\end{align}
For any representation $R$ of the Langlands group $G^\mathrm{Lan}$ in the coefficient $A$ we define the corresponding Hecke operator $\mathrm{Hecke}(\square)$ in local setting by using the categoricalized Fourier-transformation fashion functor by first taking the corresponding pushforward along $f_A$ of any complex then take the condensed tensor product with the sheaf corresponding to $R$ over the Hecke stack, then take the corresponding push-forward along:
\begin{align}
f_C: D(\mathrm{Stack}_{\mathrm{Hecke},G,I})_{\mathrm{arithmetic-D}} \rightarrow D(\mathrm{Bundle}_{G}\times \Pi_I\mathrm{Stack}_{\mathrm{Cartier},W^\mathrm{Tannakian}_{F_x}})_{\mathrm{arithmetic-D}} \\
 \rightarrow D(\mathrm{Bundle}_{G}\times \mathrm{Stack}_{\mathrm{classify},\Pi_I W^\mathrm{Tannakian}_{F_x}})_{\mathrm{arithmetic-D}}.
\end{align}
Namely we use the notation $\mathrm{Hecke}(\square)$ for:
\begin{align}
\mathrm{pushforward}_{f_B}(\mathrm{pullback}_{f_A}\square\otimes_{\blacksquare,\mathrm{complete}}\mathcal{F}_R).
\end{align}
In this local setting the corresponding operators are well-defined.
\end{theorem}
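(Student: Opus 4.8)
The plan is to obtain every assertion by base change along the fixed morphism $\mathrm{Bundle}_{G,2}\to\mathrm{Bundle}_G$ of small Artin $v$-stacks produced in the previous theorem (respectively, in setting $(4)$, by transporting the Fargues--Scholze Hecke formalism into the arithmetic $D$-module theory of \cite{AI}, \cite{KI}, \cite{KXII}, \cite{DK}), so that the corresponding statements of \cite[Chapter IX]{FS} can be invoked once one checks that the relevant geometric inputs survive the two-fold covering.

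First I would define $\mathrm{Stack}_{\mathrm{Hecke},G,2,I}:=\mathrm{Stack}_{\mathrm{Hecke},G,I}\times_{\mathrm{Bundle}_G}\mathrm{Bundle}_{G,2}$, the fiber product taken along the source map $\mathrm{Stack}_{\mathrm{Hecke},G,I}\to\mathrm{Bundle}_G$ of \cite{FS}; since $\mathrm{Bundle}_{G,2}\to\mathrm{Bundle}_G$ is a morphism of small Artin $v$-stacks, the fiber product is again a small Artin $v$-stack, and $f_A$ is the base change of that source map. The target map $f_B$ and its factorization $f_C$ through $\mathrm{Stack}_{\mathrm{classify},\Pi_I W_{K,2}}$ are obtained by the same base change from the corresponding maps of \cite{FS}, with the copies of the Cartier divisor stack replaced by $\mathrm{Stack}_{\mathrm{Cartier},W_{K,2}}$: here the point is that the distinguished element $t$ and its chosen square root $t^{1/2}$ live coherently on $\mathrm{Stack}_{\mathrm{FF},2}$ by the Definition above, so that the incidence locus cutting out modifications at the legs indexed by $I$ makes literal sense over the covered base, while the classifying-stack factorization is the quotient by the evident $\Pi_I W_{K,2}$-action and the $W_{K,2}$-equivariance of the output follows from the Drinfeld-lemma description of $\mathrm{Stack}_{\mathrm{Cartier},W_{K,2}}$, exactly as the $W_K$-equivariance is obtained in \cite{FS}.

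Next I would verify that $\mathrm{Hecke}(\square)=\mathrm{pushforward}_{f_B}\bigl(\mathrm{pullback}_{f_A}\square\otimes_{\blacksquare,\mathrm{complete}}\mathcal{F}_R\bigr)$ is a well-defined endofunctor of $D(\mathrm{Bundle}_{G,2})_{\text{lisse},\blacksquare,A}$, respectively of $D(\mathrm{Bundle}_{G,2})_{\text{KL},\mathrm{lisse},\blacksquare,\widetilde{C}_\blacksquare}$. This is where the six-functor formalism of \cite[Chapter V, Chapter VII]{FS}, together with \cite{SchIII}, \cite{CSI}, \cite{CSII} in the solid Robba--Frobenius variant, is imported: after base change along the smooth charts $\mathrm{Stack}_{b,2}\to\mathrm{Bundle}_{G,2}$ constructed in the proof above, $f_A$ is cohomologically smooth and $f_B$ is representable in nice diamonds and (ind-)proper, so $\mathrm{pullback}_{f_A}$ preserves lisse objects, tensoring with the locally free finitely generated sheaf $\mathcal{F}_R$ attached to a representation $R$ of $G^{\mathrm{Lan}}$ preserves them, and $\mathrm{pushforward}_{f_B}$ preserves them by the properness and universal local acyclicity arguments of \cite{FS}. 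For the local setting $(4)$ I would run the parallel argument with the arithmetic $D$-module six operations of \cite{AI}, \cite{KI}, \cite{KXII} in place of \cite{FS}: $f_A,f_B,f_C$ are the analogous correspondences over $\mathrm{Bundle}_G$ and $\Pi_I\mathrm{Stack}_{\mathrm{Cartier},W^{\mathrm{Tannakian}}_{F_x}}$, and $\mathrm{Hecke}(\square)$ is well-defined because pull-back, tensor product and proper push-forward are all available on $D(-)_{\mathrm{arithmetic-D}}$ and preserve the subcategory generated by overconvergent isocrystals.

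The step I expect to be the main obstacle is the preservation statement for $\mathrm{pushforward}_{f_B}$ in the solid quasi-coherent Robba--Frobenius variant and in the arithmetic $D$-module variant. In \cite{FS} this rests on $f_B$ being (ind-)proper with good finiteness together with universal local acyclicity, and transporting those finiteness inputs requires, on the two-fold cover, knowing that adjoining $t^{1/2}$ keeps the relevant diamonds spatial and the descent along $\mathrm{Stack}_{\mathrm{FF},2}\to\mathrm{Stack}_{\mathrm{FF}}$ finite, and, in the arithmetic $D$-module world, the overholonomicity-type constraints of \cite{AI}; concretely the hard part is to check that $\mathcal{F}_R$ remains \emph{propert\'e lisse} and that $\mathrm{pushforward}_{f_B}$ preserves lisse complexes, which is precisely the content of the $6$-functor conjecture stated earlier. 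Accordingly, for the $\ell$-adic settings $(1)$ and $(2)$ and for the arithmetic $D$-module setting $(4)$ the theorem follows from the cited formalisms once the covering geometry is in place, whereas for the Robba--Frobenius variant the well-definedness is contingent on that preservation property.
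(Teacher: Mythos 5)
Your proposal follows essentially the same route as the paper: you define $\mathrm{Stack}_{\mathrm{Hecke},G,2,I}$ by pullback along $\mathrm{Bundle}_{G,2}\to\mathrm{Bundle}_G$, obtain $\mathcal{F}_R$ by pulling the Satake sheaf back along $\mathrm{Stack}_{\mathrm{Hecke},G,2,I}\to\mathrm{Stack}_{\mathrm{Hecke},G,I}$, invoke the Fargues--Scholze six-functor machinery (and Abe's arithmetic $D$-module formalism for setting (4)) for well-definedness and $W_{K,2}$-equivariance, and correctly flag that the Robba--Frobenius variant is conditional on the 6-functor conjecture, exactly as the paper does. The only cosmetic difference is that the paper additionally phrases the reduction via induction to singleton $I$ and a comparison of $D(\mathrm{Bundle}_{G,2})$ with $D(\mathrm{Bundle}_{G,2}\times X_{\mathcal{C}})$, which your smoothness/properness bookkeeping replaces without changing the substance.
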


\begin{conjecture}
Furthermore assume conjecture on the 6-functor formalim for solid quasi-coherent sheaves over $v$-topology. For any representation $R$ of the Langlands group $G^\mathrm{Lan}$ in the coefficient $A$ we conjecture we can define the corresponding Hecke operator $\mathrm{Hecke}(\square)$ in mixed-parity setting by using the categoricalized Fourier-transformation fashion functor by first taking the corresponding pushforward along $f_A$ of any complex then take the condensed tensor product with the sheaf corresponding to $R$ over the Hecke stack, then take the corresponding push-forward along:
\begin{align}
f_B: D(\mathrm{Stack}_{\mathrm{Hecke},G,2,I})_{\text{KL},\mathrm{lisse},\blacksquare,\widetilde{C}_\blacksquare} \rightarrow D(\mathrm{Bundle}_{G,2}\times \Pi_I \mathrm{Stack}_{\mathrm{Cartier},W_{K,2}})_{\text{KL},\mathrm{lisse},\blacksquare,\widetilde{C}_\blacksquare}.
\end{align}
Namely we use the notation $\mathrm{Hecke}(\square)$ for:
\begin{align}
\mathrm{pushforward}_{f_B}(\mathrm{pullback}_{f_A}\square\otimes_{\blacksquare,\mathrm{complete}}\mathcal{F}_R).
\end{align}
The corresponding Hecke operation in this setting is well-defined. The corresponding image lies in the corresponding $W_{K,2}$-invariant sub-derived $\infty$-category:
\begin{align}
D(\mathrm{Bundle}_{G,2})_{\text{KL},\mathrm{lisse},\blacksquare,\widetilde{C}_\blacksquare}.
\end{align}
We conjecture the Satake formalism can be proved in this situation in order to make this definition well-defined.
\end{conjecture}

\begin{theorem}
For any finite extension $K$ over $\mathbb{Q}_p$ or $\mathbb{F}_q((z))$ with $\ell\neq p$, $p>2$ we have well-defined Schur-irreducible sheaves in the corresponding derived $\infty$-category $D(\mathrm{Bundle}_{G,2})_{\text{lisse},\blacksquare,A}$ which are containing all the corresponding Schur-irreducible sheaves in the corresponding derived $\infty$-category $D(\mathrm{Bundle}_{G})_{\text{lisse},\blacksquare,A}$ in the usual setting in \cite{FS} in certain functorial manner, while the latters are related directly to the corresponding smooth representations of the corresponding reductive group $G(K)$ by considering the Bernstein center. Then for any such Schur-irreducible complex there exist some $W_{K,2}$-parameterization into Langlands dual group in well-defined functorial way, compatible with the usual Fargues-Scholze correspondence. Here we make the same requirement on $A$ as in \cite{FS} which is chosen to be an algebraic closure of the $\ell$-adic field containing the square root of the residual field cardinality.
\end{theorem}

\begin{theorem}
For any algebraic curve $X$ over finite field, and we fix a point $x\in X$. we have well-defined Schur-irreducible sheaves in the corresponding derived $\infty$-category $D(\mathrm{Bundle}_{G})_{\mathrm{arithmetic-D}}$. Then for any such Schur-irreducible complex there exist some $W^\mathrm{Tannakian}_{F_x}$-parameterization into Langlands dual group in well-defined functorial way, compatible with the usual Abe's correspondence for $\mathrm{GL}$. Here the corresponding Tannakian groups are the Tannakian groups for the overconvergent isocrystals in the local setting as in \cite{AI}, \cite{KI}, \cite{KXII}, \cite{DK}.
\end{theorem}

We extend the Langlands program in various subprograms with certain different generalizations: 
\begin{setting}
(1) Mixed-parity functorial perturbation of the usual Langlands program after Fargues-Scholze in all characteristics. In this setting the corresponding $\mathbb{Z}_\ell$-linear derived $\infty$-category is the corresponding derived $\infty$-category:
\begin{align}
D(\mathrm{Bundle}_{G,2})_{\text{lisse},\blacksquare,A}.
\end{align}

\end{setting}

\begin{setting}
(2) Robba-Frobenius sheafified functorial perturbation of the usual Langlands program after Fargues-Scholze and Kedlaya-Liu in all characteristics. In this setting the corresponding $\mathbb{Z}_\ell$-linear derived $\infty$-category is the corresponding derived $\infty$-category:
\begin{align}
D(\mathrm{Bundle}_{G,2})_{\text{KL},\mathrm{lisse},\blacksquare,\widetilde{C}_\blacksquare}.
\end{align}

\end{setting}

\begin{setting}
(3) Global arithmetic $D$-module theoretic functorial perturbation of the usual Langlands program after Fargues-Scholze and Abe-Kedlaya-Xu. We work over a curve over finite field in charateristic $p>0$. In this setting the corresponding $\mathbb{Z}_p$-linear derived $\infty$-category is the corresponding derived $\infty$-category:
\begin{align}
D(\mathrm{Bundle}^\mathrm{global}_{G})_{\mathrm{arithmetic-D}}.
\end{align}
 
\end{setting}

\begin{setting}
(4) Local arithemtic $D$-module theoretic functorial perturbation of the usual Langlands program after Fargues-Scholze and Abe-Kedlaya-Xu. We work over a curve over finite field in charateristic $p>0$. In this setting the corresponding $\mathbb{Z}_p$-linear derived $\infty$-category is the corresponding derived $\infty$-category:
\begin{align}
D(\mathrm{Bundle}_{G})_{\mathrm{arithmetic-D}}.
\end{align}
Here we still consider the local analytic stacks as in (1) and (2), but we treat them as the corresponding formal stacks since we are in the equal characteristic situation. 
\end{setting}

\begin{remark}
These two derived $\infty$-categories of arithmetic $D$-modules (in the setting (3), (4) above) are those $\infty$-categorical versions of the categories in \cite{AI}, where one has to take the inductive categories first then consider those complexes with cohomology in the original abelian category of the $D$-modules. Note this is already some problem which is considered in \cite{AI}.
\end{remark}

\begin{remark}
In the corresponding formalization after \cite{VLa} and \cite{FS} one should consider the corresponding picture in a uniformed manner where we do have certain derived $\infty$-category by the corresponding Grothendieck categorical consideration. Then our idea is to construct certain geometric space where the corresponding Bernstein center can live. For instance in \cite{VLa} the corresponding construction is in $\ell$-adic perverse setting where eventually the corresponding Bernstein center will realize the corresponding correspondence. To be more precise any smooth representation will be mapped directly into the corresponding Bernstein center which implies cleverly the main conjecture from the smooth side to the corresponding Weil representation side.
\end{remark}

We have the formalism from \cite{VLa} (see \cite[Chapter VIII Theorem 4.1 and Chapter IX Proposition 4.1]{FS}) for any $\mathbb{Z}_\ell$-linear category namely we can realize a morphism from the corresponding cocycles for any discrete group to the corresponding Bernstein center for any such category $D$:
\begin{align}
\mathrm{BerCenter}_D
\end{align}
after taking the corresponding colimit thorough our all finite sub groups.

\begin{remark}
We start from the corresponding Hecke operations in the $\infty$-categoricalized Fourier-transformation manner. All the four settings can be formalized by consider the corresponding Weil groups:
\begin{align}
W_{K,2},W_{F_X},W_{F_x}.
\end{align}
Here $F_X$ and $F_x$ are the function fields for the curves. The corresponding Weil group equivariance in our setting is realized by using the corresponding classifier stack:
\begin{align}
\mathrm{Stack}_{\mathrm{classify},*}, 
\end{align}
where $*$ is one of:
\begin{align}
W_{K,2},W_{F_X},W_{F_x}.
\end{align}
There are certain morphisms from the stack of all the corresponding rank 1 Cartier divisors to the corresponding classifying stacks as in the above. We use the corresponding notation: 
\begin{align}
\mathrm{Stack}_{\mathrm{Cartier},*}, 
\end{align}
where $*$ is one of:
\begin{align}
W_{K,2},W_{F_X},W_{F_x}
\end{align}
for the Cartier divisor stack as in \cite{FS}.
\end{remark}

\begin{definition}
For the corresponding setting (1) and (2) we have the corresponding Hecke stacks $\mathrm{Stack}_{\mathrm{Hecke},G,2,I}$ for any finite set $I$. This Hecke stack can be defined by taking the corresponding pull-back along the corresponding morphism from $\mathrm{Bundle}_{G,2}$ to the corresponding stack $\mathrm{Bundle}_{G}$. We have then the corresponding morphisms in the following:
\begin{align}
f_A: \mathrm{Stack}_{\mathrm{Hecke},G,2,I} \rightarrow \mathrm{Bundle}_{G,2}
\end{align}
with
\begin{align}
f_B: \mathrm{Stack}_{\mathrm{Hecke},G,2,I} \rightarrow \mathrm{Bundle}_{G,2}\times \Pi_I \mathrm{Stack}_{\mathrm{Cartier},W_{K,2}}
\end{align}
with
\begin{align}
f_C: \mathrm{Stack}_{\mathrm{Hecke},G,2,I} \rightarrow \mathrm{Bundle}_{G,2}\times \Pi_I \mathrm{Stack}_{\mathrm{Cartier},W_{K,2}}\rightarrow \mathrm{Bundle}_{G,2}\times \mathrm{Stack}_{\mathrm{classify},\Pi_I W_{K,2}}.
\end{align}
For any representation $R$ of the Langlands group $G^\mathrm{Lan}$ in the coefficient $A$ we define the corresponding Hecke operator $\mathrm{Hecke}(\square)$ in mixed-parity setting by using the categoricalized Fourier-transformation fashion functor by first taking the corresponding pushforward along $f_A$ of any complex then take the condensed tensor product with the sheaf corresponding to $R$ over the Hecke stack, then take the corresponding push-forward along:
\begin{align}
f_B: D(\mathrm{Stack}_{\mathrm{Hecke},G,2,I})_{\text{lisse},\blacksquare,A} \rightarrow D(\mathrm{Bundle}_{G,2}\times \Pi_I \mathrm{Stack}_{\mathrm{Cartier},W_{K,2}})_{\text{lisse},\blacksquare,A}.
\end{align}
Namely we use the notation $\mathrm{Hecke}(\square)$ for:
\begin{align}
\mathrm{pushforward}_{f_B}(\mathrm{pullback}_{f_A}\square\otimes_{\blacksquare,\mathrm{complete}}\mathcal{F}_R).
\end{align}
For any representation $R$ of the Langlands group $G^\mathrm{Lan}$ in the coefficient $A$ we define the corresponding Hecke operator $\mathrm{Hecke}(\square)$ in mixed-parity setting by using the categoricalized Fourier-transformation fashion functor by first taking the corresponding pushforward along $f_A$ of any complex then take the condensed tensor product with the sheaf corresponding to $R$ over the Hecke stack, then take the corresponding push-forward along:
\begin{align}
f_B: D(\mathrm{Stack}_{\mathrm{Hecke},G,2,I})_{\text{KL},\mathrm{lisse},\blacksquare,\widetilde{C}_\blacksquare} \rightarrow D(\mathrm{Bundle}_{G,2}\times \Pi_I \mathrm{Stack}_{\mathrm{Cartier},W_{K,2}})_{\text{KL},\mathrm{lisse},\blacksquare,\widetilde{C}_\blacksquare}.
\end{align}
Namely we use the notation $\mathrm{Hecke}(\square)$ for:
\begin{align}
\mathrm{pushforward}_{f_B}(\mathrm{pullback}_{f_A}\square\otimes_{\blacksquare,\mathrm{complete}}\mathcal{F}_R).
\end{align}
Here we assume such $\mathcal{F}_R$ can be defined in the category of all the Frobenius sheaves here over the Robba sheaves. And at the moment we assume the six-functor formalism holds for the solid lisse complexes over solidified Robba sheaves.
\end{definition}

Then in the geometric setting we have the following:

\begin{definition}
For the corresponding setting (4) we have the corresponding Hecke stacks $\mathrm{Stack}_{\mathrm{Hecke},G,I}$ for any finite set $I$.We have then the corresponding morphisms in the following:
\begin{align}
f_A: \mathrm{Stack}_{\mathrm{Hecke},G,I} \rightarrow \mathrm{Bundle}_{G}
\end{align}
with
\begin{align}
f_B: \mathrm{Stack}_{\mathrm{Hecke},G,I} \rightarrow \mathrm{Bundle}_{G}\times \Pi_I \mathrm{Stack}_{\mathrm{Cartier},W^\mathrm{Tannakian}_{F_x}}
\end{align}
with
\begin{align}
f_C: \mathrm{Stack}_{\mathrm{Hecke},G,I} \rightarrow \mathrm{Bundle}_{G}\times \Pi_I \mathrm{Stack}_{\mathrm{Cartier},W^\mathrm{Tannakian}_{F_x}}\rightarrow \mathrm{Bundle}_{G}\times \mathrm{Stack}_{\mathrm{classify},\Pi_I W^\mathrm{Tannakian}_{F_x}}.
\end{align}
Here all the stacks are those formal stackification of the stacks in \cite{FS}, since in the local setting (4), we are in the equal characteristic situation, and we do have the 6-functor formalism from \cite{AI}. The stack:
\begin{align}
\mathrm{Stack}_{\mathrm{Cartier},W^\mathrm{Tannakian}_{F_x}}
\end{align}
is the moduli Tannakian stack of arithmetic $D$-modules (which are actually just $F$-isocrystals) over the formal Cartier stack (which is the formal stackification of the corresponding stack in \cite{FS}). 
For any representation $R$ of the Langlands group $G^\mathrm{Lan}$ in the coefficient $A$ we define the corresponding Hecke operator $\mathrm{Hecke}(\square)$ in local setting by using the categoricalized Fourier-transformation fashion functor by first taking the corresponding pushforward along $f_A$ of any complex then take the condensed tensor product with the sheaf corresponding to $R$ over the Hecke stack, then take the corresponding push-forward along:
\begin{align}
f_B: D(\mathrm{Stack}_{\mathrm{Hecke},G,I})_{\mathrm{arithmetic-D}} \rightarrow D(\mathrm{Bundle}_{G}\times \Pi_I \mathrm{Stack}_{\mathrm{Cartier},W^\mathrm{Tannakian}_{F_x}})_{\mathrm{arithmetic-D}}.
\end{align}
Namely we use the notation $\mathrm{Hecke}(\square)$ for:
\begin{align}
\mathrm{pushforward}_{f_B}(\mathrm{pullback}_{f_A}\square\otimes_{\blacksquare,\mathrm{complete}}\mathcal{F}_R).
\end{align}

\end{definition}

\begin{definition}
In the settings (1), (2) and (4), we can define the corresponding $\infty$-categorical Hecke eigensheaves as in the usual geometric Langlands as well. Those are the complexes which are retained as external tensor product with $\mathcal{F}_R$.
\end{definition}

\indent Then we can formulate the following theorems:
\begin{theorem}\label{theorem17}
For the corresponding setting (1) and (2) we have the corresponding Hecke stacks $\mathrm{Stack}_{\mathrm{Hecke},G,2,I}$ for any finite set $I$. This Hecke stack can be defined by taking the corresponding pull-back along the corresponding morphism from $\mathrm{Bundle}_{G,2}$ to the corresponding stack $\mathrm{Bundle}_{G}$. We have then the corresponding morphisms in the following:
\begin{align}
f_A: \mathrm{Stack}_{\mathrm{Hecke},G,2,I} \rightarrow \mathrm{Bundle}_{G,2}
\end{align}
with
\begin{align}
f_B: \mathrm{Stack}_{\mathrm{Hecke},G,2,I} \rightarrow \mathrm{Bundle}_{G,2}\times \Pi_I \mathrm{Stack}_{\mathrm{Cartier},W_{K,2}}
\end{align}
with
\begin{align}
f_C: \mathrm{Stack}_{\mathrm{Hecke},G,2,I} \rightarrow \mathrm{Bundle}_{G,2}\times\Pi_I  \mathrm{Stack}_{\mathrm{Cartier},W_{K,2}}\rightarrow \mathrm{Bundle}_{G,2}\times \mathrm{Stack}_{\mathrm{classify},\Pi_I W_{K,2}}.
\end{align}
The corresponding Hecke operation in this setting is well-defined. The corresponding image lies in the corresponding $W_{K,2}$-invariant sub-derived $\infty$-category:
\begin{align}
D(\mathrm{Bundle}_{G,2})_{\text{lisse},\blacksquare,A}.
\end{align}
For any representation $R$ of the Langlands group $G^\mathrm{Lan}$ in the coefficient $A$ we define the corresponding Hecke operator $\mathrm{Hecke}(\square)$ in mixed-parity setting by using the categoricalized Fourier-transformation fashion functor by first taking the corresponding pushforward along $f_A$ of any complex then take the condensed tensor product with the sheaf corresponding to $R$ over the Hecke stack, then take the corresponding push-forward along:
\begin{align}
f_B: D(\mathrm{Stack}_{\mathrm{Hecke},G,2,I})_{\text{lisse},\blacksquare,A} \rightarrow D(\mathrm{Bundle}_{G,2}\times \Pi_I \mathrm{Stack}_{\mathrm{Cartier},W_{K,2}})_{\text{lisse},\blacksquare,A}.
\end{align}
Namely we use the notation $\mathrm{Hecke}(\square)$ for:
\begin{align}
\mathrm{pushforward}_{f_B}(\mathrm{pullback}_{f_A}\square\otimes_{\blacksquare,\mathrm{complete}}\mathcal{F}_R).
\end{align}
\end{theorem}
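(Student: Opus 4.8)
The plan is to deduce everything by base change along the Cartesian square $\mathrm{Bundle}_{G,2} = \mathrm{Bundle}_G \times_{\mathrm{Stack}_{\mathrm{FF}}} \mathrm{Stack}_{\mathrm{FF},2}$ already exhibited in the proof of the Artinness theorem above. First I would \emph{define} $\mathrm{Stack}_{\mathrm{Hecke},G,2,I}$ as the fiber product $\mathrm{Stack}_{\mathrm{Hecke},G,I} \times_{\mathrm{Bundle}_G} \mathrm{Bundle}_{G,2}$, where $\mathrm{Stack}_{\mathrm{Hecke},G,I}$ is the Fargues--Scholze Hecke stack of $I$-legged modifications of $G$-bundles taken over its source map to $\mathrm{Bundle}_G$; equivalently one forms the fiber product over $\mathrm{Stack}_{\mathrm{FF}}$ with $\mathrm{Stack}_{\mathrm{FF},2}$. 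The three structure maps $f_A$, $f_B$, $f_C$ are then the base changes along $\mathrm{Bundle}_{G,2} \to \mathrm{Bundle}_G$ of the source map, of the source map together with the legs, and of their composition with $\Pi_I \mathrm{Stack}_{\mathrm{Cartier},W_K} \to \mathrm{Stack}_{\mathrm{classify},\Pi_I W_K}$ of \cite[Chapter IX]{FS}, the only change being that the Cartier and classifying stacks are now built on $W_{K,2}$; this is legitimate because $W_{K,2} \to W_K$ is a central $\mathbb{Z}/2$-extension compatible with Frobenius, so the Drinfeld-lemma description of those stacks carries over verbatim. Since $\mathrm{Bundle}_{G,2} \to \mathrm{Bundle}_G$ is cohomologically smooth (it is the base change $\times_{\mathrm{Spd}\overline{\mathbb{F}}_p}\mathrm{Spd}\overline{\mathbb{F}}_p((k^{1/p^\infty}))$ after the finite extension $*$), and since representability, properness of $f_A$, and ind-properness of $f_B$ are stable under base change, all the geometric input of \cite[Chapters IV, V, IX]{FS} transports to the starred stacks.

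Second, I would produce the kernel $\mathcal{F}_R$ attached to a representation $R$ of $G^\mathrm{Lan}$ (with $W_K$ now replaced by $W_{K,2}$). The one genuinely new point is a mixed-parity geometric Satake: the spherical Hecke category over the $B_{\mathrm{dR},2}$-Grassmannian --- which, by the definition above, is the pullback of the $B_{\mathrm{dR}}$-Grassmannian along $\mathrm{Bundle}_{G,2} \to \mathrm{Bundle}_G$ --- is identified with the usual one, since passing to the two-fold cover enlarges only the Weil-group factor acting on the fusion/nearby-cycles functor and leaves the underlying perverse category untouched. Thus the Fargues--Scholze Satake equivalence upgrades to an equivalence between $\mathrm{Rep}_A(G^\mathrm{Lan})$ and the corresponding equivariant Satake category, and $\mathcal{F}_R$ is the image of $R$ pulled back to $\mathrm{Stack}_{\mathrm{Hecke},G,2,I}$. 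The extension $*$ carrying $\sqrt{\varphi(k^{1/2})}$ enters exactly as the square root of the residual cardinality does in \cite{FS}: it only fixes the half-integral Tate twists so that $\mathcal{F}_R$ is normalized correctly, and by the footnoted convention we assume its existence throughout.

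Third, I would assemble $\mathrm{Hecke}(\square) = \mathrm{pushforward}_{f_B}\bigl(\mathrm{pullback}_{f_A}\square \otimes_{\blacksquare,\mathrm{complete}} \mathcal{F}_R\bigr)$ and verify that each of the three operations preserves the lisse solid category $D(\mathrm{Bundle}_{G,2})_{\text{lisse},\blacksquare,A}$: $\mathrm{pullback}_{f_A}$ does so because $f_A$ is cohomologically smooth (base change of \cite[Chapter IX]{FS}); the solid complete tensor with $\mathcal{F}_R$ does so because $\mathcal{F}_R$ is universally locally acyclic (ULA), in particular dualizable, relative to the Cartier-divisor base, which is again geometric Satake; and $\mathrm{pushforward}_{f_B}$ does so because $f_B$ is ind-proper and the complexes in play are ULA over $\Pi_I \mathrm{Stack}_{\mathrm{Cartier},W_{K,2}}$, so $f_{B,!} = f_{B,*}$ preserves ULA-ness and commutes with the base changes used to detect lisseness --- this is \cite[Chapters IV, V]{FS} transported along the cohomologically smooth base change, using \cite{SchIII}, \cite{CSI}, \cite{CSII}. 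Finally, factoring $f_B$ through $f_C$ exhibits the output as a complex on $\mathrm{Bundle}_{G,2}$ equipped with a continuous $\Pi_I W_{K,2}$-action, i.e. an object of the $W_{K,2}$-invariant (equivalently, equivariant) subcategory, by the same partial-Frobenius argument as in \cite[Chapter IX]{FS}, now with the Drinfeld lemma for the central $\mathbb{Z}/2$-cover.

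The main obstacle is the stability of the lisse subcategory under $\mathrm{pushforward}_{f_B}$ in the condensed solid setting: in the $\ell$-adic \'etale case this is a theorem of \cite{FS}, but a priori the solid complete tensor and the solid proper pushforward need not commute with the truncations defining the lisse (dualizable, ULA) subcategory, so one genuinely needs a mixed-parity ULA-preservation theorem. For setting (1) I would obtain it by transporting \cite[Chapters IV--V]{FS} along the smooth cover; for setting (2), over the Kedlaya--Liu Robba sheaf $\widetilde{C}_\blacksquare$, this ULA-preservation is precisely the six-functor formalism for solid quasi-coherent sheaves over the $v$-topology that the excerpt only conjectures, so there the statement is conditional on that conjecture --- which is how I would delimit its scope.
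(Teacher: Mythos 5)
Your proposal follows essentially the same route as the paper: define $\mathrm{Stack}_{\mathrm{Hecke},G,2,I}$ by pullback along $\mathrm{Bundle}_{G,2}\rightarrow\mathrm{Bundle}_{G}$, obtain $\mathcal{F}_R$ by pulling the Fargues--Scholze Satake kernel back along $\mathrm{Stack}_{\mathrm{Hecke},G,2,I}\rightarrow\mathrm{Stack}_{\mathrm{Hecke},G,I}$, transport the six-functor machinery of \cite{FS} along the base change via the commutative diagram with the usual Hecke operator, and flag the Robba-sheaf setting (2) as conditional on the conjectured solid six-functor formalism --- exactly the caveats the paper itself makes. Your write-up is in fact more explicit than the paper's (spelling out the ULA-preservation and cohomological-smoothness inputs, where the paper instead invokes the reduction to singleton $I$ and the comparison with $D(\mathrm{Bundle}_{G,2}\times X_\mathcal{C})_{\text{lisse},\blacksquare,A}$), but the underlying argument is the same.
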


\begin{proof}
Here the analog of the Satake formalism in our setting from \cite{FS} actually goes in the sense that we can pull any sheaf $\mathcal{F}_R$ associated with a representation $R$ of the Langlands dual group along the morphisms:
\begin{align}
\mathrm{Stack}_{\mathrm{Hecke},G,2,I} \rightarrow \mathrm{Stack}_{\mathrm{Hecke},G,I}.
\end{align} 
The corresponding well-definedness are directly from the corresponding machinery in \cite{FS} constructed for all generalized $v$-stacks. As in \cite[Corollary 2.3 in Chapter IX]{FS} (after we assume the conjecture on the 6-functors) one can use induction to further reduce to the situation where $I$ is singleton. Then in this case one compares the two categories:
\begin{align}
D(\mathrm{Bundle}_{G,2})_{\text{lisse},\blacksquare,A},D(\mathrm{Bundle}_{G,2}\times X_\mathcal{C})_{\text{lisse},\blacksquare,A}
\end{align}
where $C$ is the $z$-adic or $p$-adic complex number field in the first situation, and then one compares the two categories:
\begin{align}
D(\mathrm{Bundle}_{G,2})_{\text{KL},\mathrm{lisse},\blacksquare,\widetilde{C}_\blacksquare},D(\mathrm{Bundle}_{G,2}\times X_\mathcal{C})_{\text{KL},\mathrm{lisse},\blacksquare,\widetilde{C}_\blacksquare}
\end{align}
where $C$ is the $z$-adic or $p$-adic complex number field in the second  situation. In the $\ell$-adic situation, one can also prove this by using the corresponding functoriality from \cite{FS} by using the results in the usual situation. Namely we look at the corresponding Hecke operator in the mixed-parity situation:
\begin{align}
D(\mathrm{Bundle}_{G,2})_{\text{lisse},\blacksquare,A} \rightarrow D(\mathrm{Bundle}_{G,2}\times \Pi_I \mathrm{Stack}_{\mathrm{Cartier},W_{K,2}})_{\text{lisse},\blacksquare,A}
\end{align}
with the usual Hecke operator in \cite{FS}:
\begin{align}
D(\mathrm{Bundle}_{G})_{\text{lisse},\blacksquare,A} \rightarrow D(\mathrm{Bundle}_{G}\times \Pi_I \mathrm{Stack}_{\mathrm{Cartier},W_{K}})_{\text{lisse},\blacksquare,A}
\end{align}
where they form a commutative diagram by using the functoriality from the \cite{FS}. Then we can get the well-definedness of this map with the desired image lying in the category of $W_{K,2}$-equivariant objects. In the Robba sheaves situation, we have no such issue since we assume the corresponding six-operator with desired Endlichkeitssatz for lisse Robba sheaf complexes in the desired category.
\end{proof}

\begin{conjecture}
Furthermore assume conjecture on the 6-functor formalim for solid quasi-coherent sheaves over $v$-topology. For any representation $R$ of the Langlands group $G^\mathrm{Lan}$ in the coefficient $A$ we conjecture we can define the corresponding Hecke operator $\mathrm{Hecke}(\square)$ in mixed-parity setting by using the categoricalized Fourier-transformation fashion functor by first taking the corresponding pushforward along $f_A$ of any complex then take the condensed tensor product with the sheaf corresponding to $R$ over the Hecke stack, then take the corresponding push-forward along:
\begin{align}
f_B: D(\mathrm{Stack}_{\mathrm{Hecke},G,2,I})_{\text{KL},\mathrm{lisse},\blacksquare,\widetilde{C}_\blacksquare} \rightarrow D(\mathrm{Bundle}_{G,2}\times \Pi_I \mathrm{Stack}_{\mathrm{Cartier},W_{K,2}})_{\text{KL},\mathrm{lisse},\blacksquare,\widetilde{C}_\blacksquare}.
\end{align}
Namely we use the notation $\mathrm{Hecke}(\square)$ for:
\begin{align}
\mathrm{pushforward}_{f_B}(\mathrm{pullback}_{f_A}\square\otimes_{\blacksquare,\mathrm{complete}}\mathcal{F}_R).
\end{align}
The corresponding Hecke operation in this setting is well-defined. The corresponding image lies in the corresponding $W_{K,2}$-invariant sub-derived $\infty$-category:
\begin{align}
D(\mathrm{Bundle}_{G,2})_{\text{KL},\mathrm{lisse},\blacksquare,\widetilde{C}_\blacksquare}.
\end{align}
We conjecture the Satake formalism can be proved in this situation in order to make this definition well-defined.
\end{conjecture}

\begin{theorem}
For the corresponding setting (4) we have the corresponding Hecke stacks $\mathrm{Stack}_{\mathrm{Hecke},G,I}$ for any finite set $I$.We have then the corresponding morphisms in the following:
\begin{align}
f_A: \mathrm{Stack}_{\mathrm{Hecke},G,I} \rightarrow \mathrm{Bundle}_{G}
\end{align}
with
\begin{align}
f_B: \mathrm{Stack}_{\mathrm{Hecke},G,I} \rightarrow \mathrm{Bundle}_{G}\times \Pi_I \mathrm{Stack}_{\mathrm{Cartier},W^\mathrm{Tannakian}_{F_x}}
\end{align}
with
\begin{align}
f_C: \mathrm{Stack}_{\mathrm{Hecke},G,I} \rightarrow \mathrm{Bundle}_{G}\times \Pi_I \mathrm{Stack}_{\mathrm{Cartier},W^\mathrm{Tannakian}_{F_x}}\rightarrow \mathrm{Bundle}_{G}\times \mathrm{Stack}_{\mathrm{classify},\Pi_I W^\mathrm{Tannakian}_{F_x}}.
\end{align}
For any representation $R$ of the Langlands group $G^\mathrm{Lan}$ in the coefficient $A$ we define the corresponding Hecke operator $\mathrm{Hecke}(\square)$ in local setting by using the categoricalized Fourier-transformation fashion functor by first taking the corresponding pushforward along $f_A$ of any complex then take the condensed tensor product with the sheaf corresponding to $R$ over the Hecke stack, then take the corresponding push-forward along:
\begin{align}
f_C: D(\mathrm{Stack}_{\mathrm{Hecke},G,I})_{\mathrm{arithmetic-D}} \rightarrow D(\mathrm{Bundle}_{G}\times \Pi_I\mathrm{Stack}_{\mathrm{Cartier},W^\mathrm{Tannakian}_{F_x}})_{\mathrm{arithmetic-D}} \\
 \rightarrow D(\mathrm{Bundle}_{G}\times \mathrm{Stack}_{\mathrm{classify},\Pi_I W^\mathrm{Tannakian}_{F_x}})_{\mathrm{arithmetic-D}}.
\end{align}
Namely we use the notation $\mathrm{Hecke}(\square)$ for:
\begin{align}
\mathrm{pushforward}_{f_B}(\mathrm{pullback}_{f_A}\square\otimes_{\blacksquare,\mathrm{complete}}\mathcal{F}_R).
\end{align}
In both setting the corresponding operations are well-defined. The image of the functor lies in the $W^\mathrm{Tannakian}$-equivarient arithmetic $D$-modules over the $\mathrm{Bundle}_G$.
\end{theorem}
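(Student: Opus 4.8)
The plan is to run the arithmetic $D$-module incarnation of the proof of \Cref{theorem17}: one repeats that strategy essentially word for word, with the solid $\ell$-adic and solid Robba sheaf theories of \cite{FS}, \cite{KLI}, \cite{KLII} replaced by the theory of overconvergent $F$-isocrystals and arithmetic $D$-modules of Abe, Kedlaya and Kedlaya--Xu \cite{AI}, \cite{KI}, \cite{KXII}, \cite{DK}. The point that makes this go through is that in the equal-characteristic local setting of $(4)$ every stack in sight --- $\mathrm{Bundle}_{G}$, $\mathrm{Stack}_{\mathrm{Hecke},G,I}$, $\mathrm{Stack}_{\mathrm{Cartier},W^{\mathrm{Tannakian}}_{F_x}}$ and $\mathrm{Stack}_{\mathrm{classify},\Pi_I W^{\mathrm{Tannakian}}_{F_x}}$ --- is the formal stackification of the corresponding $v$-stack of \cite{FS}, and on each affine formal chart $D(-)_{\mathrm{arithmetic-D}}$ is the $\infty$-categorical enhancement of the categories of \cite{AI} (first pass to $\mathrm{Ind}$-completions, then take complexes with cohomology in the abelian category of arithmetic $D$-modules, as in the remark above), for which a stable six-functor formalism is available from \cite{AI} together with \cite{KI}, \cite{KXII}.

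First I would record that $f_A$, $f_B$, $f_C$ are the formal-stackified pullbacks of the Hecke correspondence morphisms of \cite[Chapter IX]{FS}; in particular $f_A$ is ind-(proper and smooth), and $f_B$ (resp.\ the composite to $\mathrm{Stack}_{\mathrm{classify},\Pi_I W^{\mathrm{Tannakian}}_{F_x}}$ giving $f_C$) restricts over each rank-one formal Cartier divisor to a morphism to which Abe's six-functor formalism applies. Hence $\mathrm{pullback}_{f_A}$, the completed tensor with $\mathcal{F}_R$, and $\mathrm{pushforward}_{f_B}$ (resp.\ $\mathrm{pushforward}_{f_C}$) are all defined and carry complexes with cohomology in the abelian category of arithmetic $D$-modules to complexes of the same kind, which is precisely the assertion that $\mathrm{Hecke}(\square)$ is well-defined. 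The kernel $\mathcal{F}_R$ attached to a representation $R$ of $G^{\mathrm{Lan}}$ is produced by local geometric Satake in the isocrystal setting: since $\mathrm{Stack}_{\mathrm{Cartier},W^{\mathrm{Tannakian}}_{F_x}}$ is the moduli Tannakian stack of $F$-isocrystals on the formal Cartier stack, $R$ determines an $F$-isocrystal with dual-group structure which Satake transports to the arithmetic $D$-module $\mathcal{F}_R$ on $\mathrm{Stack}_{\mathrm{Hecke},G,I}$.

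Next I would reduce the equivariance claim to $I$ a singleton: by the factorization/fusion structure of the Hecke stacks, exactly as in \cite[Corollary 2.3, Chapter IX]{FS}, an induction on $|I|$ reduces to $|I|=1$, where one compares $D(\mathrm{Bundle}_{G})_{\mathrm{arithmetic-D}}$ with $D(\mathrm{Bundle}_{G}\times X_{\mathcal{C}})_{\mathrm{arithmetic-D}}$, $\mathcal{C}$ the relevant $p$-adic coefficient field. The $W^{\mathrm{Tannakian}}_{F_x}$-action on the output is then produced by the Drinfeld lemma for isocrystals of \cite{KI}, \cite{KXII} in its Tannakian form: the partial Frobenii on $\mathrm{Stack}_{\mathrm{Hecke},G,I}$ fibered over $\Pi_I$ copies of the formal disk induce, after pushforward, an action of $\Pi_I W^{\mathrm{Tannakian}}_{F_x}$ commuting with the Hecke functors, so that $\mathrm{Hecke}(\square)$ indeed lands in the $W^{\mathrm{Tannakian}}$-equivariant subcategory of arithmetic $D$-modules over $\mathrm{Bundle}_G$, compatibly with Abe's correspondence for $\mathrm{GL}$. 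The same argument with the global curve in place of the formal disk handles the global setting $(3)$.

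The main obstacle I anticipate is the preservation of arithmetic-$D$ finiteness --- holonomicity, the Endlichkeitssatz --- under $\mathrm{pushforward}_{f_C}$: Abe's finiteness theorems are formulated for schemes and curves of finite type over $\mathbb{F}_q$, whereas the Hecke and bundle stacks here are of infinite type, so one must genuinely exploit the $\mathrm{Ind}$-categorical enhancement and exhaust the stacks by finite-type charts (the Kottwitz strata $\mathrm{Stack}_b$ and their Hecke analogs) together with a limiting argument, checking that passing to the limit does not destroy the $D$-module structure or the finiteness. A secondary difficulty is casting the Drinfeld lemma for isocrystals into the precise Tannakian shape needed to obtain an honest $W^{\mathrm{Tannakian}}_{F_x}$-action rather than a bare monodromy action, which is exactly where \cite{KI}, \cite{KXII}, \cite{DK} are indispensable.
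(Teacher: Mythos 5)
Your proposal follows the same skeleton as the paper's proof: well-definedness from Abe's machinery \cite{AI}, a Satake-type construction of the kernel $\mathcal{F}_R$, reduction to singleton $I$ as in \cref{theorem17}, and then the derivation of equivariance. Two points of divergence are worth recording. First, for the Satake step with a general reductive $G$ (not just $\mathrm{GL}_n$) the paper leans specifically on \cite[Section 3]{XZ}, where the Satake formalism for $F$-isocrystals attached to reductive groups is actually established; your appeal to a generic ``local geometric Satake in the isocrystal setting'' is the right idea but should be anchored there, since Abe's work alone does not supply the dual-group equivalence beyond $\mathrm{GL}$. Second, for the equivariance the paper argues by passing to the algebraic closure $\mathbb{C}$ after reducing to a singleton (comparing $D(\mathrm{Bundle}_G)_{\mathrm{arithmetic-D}}$ with $D(\mathrm{Bundle}_G\times X_{\mathcal{C}})_{\mathrm{arithmetic-D}}$ exactly as in \cref{theorem17}), whereas you produce the $W^{\mathrm{Tannakian}}_{F_x}$-action directly from the partial Frobenii via the Drinfeld lemma for isocrystals \cite{KI}, \cite{KXII}; the latter is arguably the more honest mechanism in the Tannakian setting and is consistent with how the paper uses those references elsewhere, so this is a legitimate and somewhat more explicit alternative route to the same conclusion. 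Your closing discussion of the Endlichkeitssatz for pushforward along infinite-type stacks, and of the need to exhaust by the Kottwitz strata, identifies a real gap that the paper's own proof passes over in silence; neither your sketch nor the paper actually closes it, so it should be flagged as an assumption rather than presented as established.
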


\begin{proof}
The corresponding well-definedness are directly from the corresponding machinery in \cite{AI} constructed for all generalized $v$-stacks. By \cite[Section 3]{XZ} the Satake formalism directly sends any representation of the Langlands dual group to a sheaf over the Hecke stacks. We are in the local setting and the local stacks we are considering are actually formal discs with Frobenius quotients. Then following \cite{VLa}, \cite{FS} one argues as in \cref{theorem17} to reduce first to the situation where $I$ is a singleton, then passing to the algebraic closure $\mathbb{C}$ in the current situation to derive that the image of the functor will be the $W^\mathrm{Tannakian}$-equivariant arithmetic $D$-modules.
 \end{proof}

\begin{remark}
The corresponding formalism in \cite[Chapter VIII Theorem 4.1, Chapter IX Proposition 4.1, and the proof]{FS} is actually can be applied in the scenario when we have any discrete group and any $\mathbb{Z}_\ell$-linear categories. And we \textit{conjecture} by using the corresponding formalism we can actually give a proof on the corresponding $\mathbb{Z}_p$-linear categorical situation for arithmetic $D$-modules.
\end{remark}

We have the formalism from \cite{VLa} (see \cite[Chapter VIII Theorem 4.1 and Chapter IX Proposition 4.1]{FS}) for any $\mathbb{Z}_p$-linear category namely we can realize a morphism from the corresponding cocycles for any discrete group to the corresponding Bernstein center for any such category $D$:
\begin{align}
\mathrm{BerCenter}_D
\end{align}
after taking the corresponding colimit thorough our all finite sub groups. See \cite[Chapter VIII Theorem 4.1 and Chapter IX Proposition 4.1]{FS}, literally after \cite{VLa}.

\begin{theorem}
For any finite extension $K$ over $\mathbb{Q}_p$ or $\mathbb{F}_q((z))$ with $\ell\neq p$, $p>2$ we have well-defined Schur-irreducible sheaves in the corresponding derived $\infty$-category $D(\mathrm{Bundle}_{G,2})_{\text{lisse},\blacksquare,A}$ which are containing all the corresponding Schur-irreducible sheaves in the corresponding derived $\infty$-category $D(\mathrm{Bundle}_{G})_{\text{lisse},\blacksquare,A}$ in the usual setting in \cite{FS} in certain functorial manner, while the latters are related directly to the corresponding smooth representations of the corresponding reductive group $G(K)$ by considering the Bernstein center. Then for any such Schur-irreducible complex there exist some $W_{K,2}$-parameterization into Langlands dual group in well-defined functorial way, compatible with the usual Fargues-Scholze correspondence. Here we make the same requirement on $A$ as in \cite{FS} which is chosen to be an algebraic closure of the $\ell$-adic field containing the square root of the residual field cardinality.
\end{theorem}

\begin{proof}
Indeed, $W_{K,2}$ as in \cite{FS} can be discretized under open subgroups of two-fold covering of the wild inertia in our situation, where the discretization is following \cite{FS}, which sends us into the corresponding formalism from Lafforgue and Fargues-Scholze. In the discrete setting the formalism is basically just direct application and in the general situation the proof can actually given in the completely same fashion. See \cite[Chapter VIII Theorem 4.1 and Chapter IX Proposition 4.1]{FS}, literally after \cite{VLa}.
\end{proof}

\begin{theorem}
For any algebraic curve $X$ over finite field, and we fix a point $x\in X$. we have well-defined Schur-irreducible sheaves in the corresponding derived $\infty$-category $D(\mathrm{Bundle}_{G})_{\mathrm{arithmetic-D}}$. Then for any such Schur-irreducible complex there exist some $W^\mathrm{Tannakian}_{F_x}$-parameterization into Langlands dual group in well-defined functorial way, compatible with the usual Abe's correspondence for $\mathrm{GL}$. Here the corresponding Tannakian groups are the Tannakian groups for the overconvergent isocrystals in the local setting as in \cite{AI}, \cite{KI}, \cite{KXII}, \cite{DK}.
\end{theorem}

\begin{proof}
In fact $W^\mathrm{Tannakian}_{F_x}$ is actually algebraic group, which sends us into the corresponding formalism from Lafforgue and Fargues-Scholze. The proof can actually given in the completely same fashion. See \cite[Chapter VIII Theorem 4.1 and Chapter IX Proposition 4.1]{FS}, literally after \cite{VLa}.
\end{proof}

\begin{remark}
Here the proof of this theorem is essentially relying on the formalism of \cite{FS}, where we consider the $\mathrm{Bundle}_G$ as a formal stack from \cite{FS} while keeping all the stacks involved available while regarding theorem as formal stacks (not $v$-stacks). This is reasonable since we are in the corresponding equal characteristic situation. However we believe there is another proof by using the formalism in \cite{GL}, i.e. the corresponding modulis of shtukas in the local setting. The basic idea here follows again essentially \cite{FS}: in the $\ell$-adic setting, since \cite{FS} used the perfectoid framework simplifies the proof of \cite{GL} into a straighforward local proof by using the $v$-stacks involved, one should regard as the strategy we adpoted as an arithmetic $D$-module picture parallel to and after this.
\end{remark}

\begin{remark}
Then we can consider the corresponding generalization of this picture further after \cite{FS}, such as the corresponding spectral action conjecture by considering certain moduli stack on the other side, again after \cite{AI}, \cite{KI} and \cite{KXII}. After \cite{DHKM}, \cite{FS}, \cite{Z}, one can see that the corresponding construction can be directly generalized to our setting by discretization of the two-fold covering Weil groups in the perfectoid situation, namely by using those discrete quotient to form substacks of the full stacks.
\end{remark}

\newpage
\chapter{Motivic Approach to Generalized Langlands Program}

\noindent In this section we prove the following theorems which are directly generalization of Scholze's motivic approach to local Langlands in \cite{Scho1} after \cite{V1}, \cite{A1}, \cite{RS}, \cite{Scho2}:

\begin{theorem}
For the current setting we have the corresponding Hecke stacks $\mathrm{Stack}_{\mathrm{Hecke},G,2,I}$ for any finite set $I$. This Hecke stack can be defined by taking the corresponding pull-back along the corresponding morphism from $\mathrm{Bundle}_{G,2}$ to the corresponding stack $\mathrm{Bundle}_{G}$. We have then the corresponding morphisms in the following:
\begin{align}
f_A: \mathrm{Stack}_{\mathrm{Hecke},G,2,I} \rightarrow \mathrm{Bundle}_{G,2}
\end{align}
with
\begin{align}
f_B: \mathrm{Stack}_{\mathrm{Hecke},G,2,I} \rightarrow \mathrm{Bundle}_{G,2}\times \Pi_I \mathrm{Stack}_{\mathrm{Cartier},M_{K,2}}
\end{align}
with
\begin{align}
f_C: \mathrm{Stack}_{\mathrm{Hecke},G,2,I} \rightarrow \mathrm{Bundle}_{G,2}\times \Pi_I \mathrm{Stack}_{\mathrm{Cartier},M_{K,2}}\rightarrow \mathrm{Bundle}_{G,2}\times \mathrm{Stack}_{\mathrm{classify},\Pi_I M_{K,2}}.
\end{align}
The corresponding Hecke operation in this setting is well-defined. The corresponding image lies in the corresponding $M_{K,2}$-invariant sub-derived $\infty$-category:
\begin{align}
D(\mathrm{Bundle}_{G,2})_{\text{motivic},\blacksquare}.
\end{align}
For any representation $R$ of the Langlands group $G^\mathrm{Lan}$ we define the corresponding Hecke operator $\mathrm{Hecke}(\square)$ in mixed-parity setting by using the categoricalized Fourier-transformation fashion functor by first taking the corresponding pushforward along $f_A$ of any complex then take the condensed tensor product with the sheaf corresponding to $R$ over the Hecke stack, then take the corresponding push-forward along:
\begin{align}
f_B: D(\mathrm{Stack}_{\mathrm{Hecke},G,2,I})_{\text{motivic},\blacksquare} \rightarrow D(\mathrm{Bundle}_{G,2}\times \Pi_I \mathrm{Stack}_{\mathrm{Cartier},M_{K,2}})_{\text{motivic},\blacksquare}.
\end{align}
Namely we use the notation $\mathrm{Hecke}(\square)$ for:
\begin{align}
\mathrm{pushforward}_{f_B}(\mathrm{pullback}_{f_A}\square\otimes_{\blacksquare,\mathrm{complete}}\mathcal{F}_R).
\end{align}
Here $\mathrm{Stack}_{\mathrm{Cartier},M_{K,2}}$ is the corresponding generalized motivic Cartier stack after \cite{Scho1} with the corresponding motivic Weil group $M_{K,2}$. This is defined by using tensor monoidal $\infty$-category of motivic sheaves over the $\mathrm{Stack}_{\mathrm{Cartier},W_{K,2}}$, i.e. the Tannakian group in this situation. For more detail see \cite{A2}, \cite{A3}, \cite{A4}. One takes the localization of the generalized motivic category of $\infty$-sheaves over $v$-site of Banach rings to the Cartier stack in our setting, which will produces the corresponding motivic generalized Cartier stack in our setting.
\end{theorem}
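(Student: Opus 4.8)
\noindent The plan is to proceed in three stages, mirroring the $\ell$-adic and Robba-Frobenius developments carried out earlier in this chapter and in particular the argument of \cref{theorem17}. First I would construct the motivic Cartier stack $\mathrm{Stack}_{\mathrm{Cartier},M_{K,2}}$. Following \cite{Scho1} together with the motivic Tannakian formalism of \cite{A2}, \cite{A3}, \cite{A4} (and the realization picture of \cite{V1}, \cite{RS}, \cite{Scho2}), one starts from the symmetric monoidal stable $\infty$-category of motivic $\infty$-sheaves over the $v$-site of Banach rings, localizes it onto the usual Cartier stack $\mathrm{Stack}_{\mathrm{Cartier},W_{K,2}}$ of \cite{FS}, and takes the Tannakian dual of the resulting rigid monoidal category; this produces the pro-algebraic motivic Weil group $M_{K,2}$ together with the classifying stack $\mathrm{Stack}_{\mathrm{classify},\Pi_I M_{K,2}}$ and the morphism from the moduli of rank-$1$ Cartier divisors into it. The two-fold covering enters exactly as in the preceding definitions: one adjoins $t^{1/2}$ to the period sheaves and passes to the finite extension $*$ realizing $G_{K,2}$, which is harmless for the Tannakian construction since it is a pro-finite modification of the coefficient datum.

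\noindent Second, I would define the Hecke stack $\mathrm{Stack}_{\mathrm{Hecke},G,2,I}$ by the same pull-back procedure used for $\mathrm{Bundle}_{G,2}$, namely as the fiber product of the Fargues-Scholze Hecke stack with $\mathrm{Stack}_{\mathrm{FF},2}$ over $\mathrm{Stack}_{\mathrm{FF}}$, so that the three legs $f_A$, $f_B$, $f_C$ are inherited from the correspondence diagram of \cite{FS} base-changed along $\mathrm{Bundle}_{G,2}\rightarrow\mathrm{Bundle}_{G}$; their representability and the relevant (ind-)properness and smoothness properties descend along the smooth covering $\mathrm{Stack}_{\mathrm{FF},2}\rightarrow\mathrm{Stack}_{\mathrm{FF}}$. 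The operator $\mathrm{Hecke}(\square)=\mathrm{pushforward}_{f_B}(\mathrm{pullback}_{f_A}\square\otimes_{\blacksquare,\mathrm{complete}}\mathcal{F}_R)$ then makes sense once one has: (i) the motivic six-functor formalism available on these small Artin $v$-stacks, which I would obtain by transporting the formalism of \cite{A2}, \cite{A3} through the machinery of \cite[Chapter V, Chapter VII]{FS} in exactly the way used to establish the well-definedness of $D(\mathrm{Bundle}_{G,2})_{\text{etale},A}$ above; and (ii) the sheaf $\mathcal{F}_R$ attached to a representation $R$ of $G^\mathrm{Lan}$, which comes by motivic realization of the geometric Satake sheaf of \cite[Section 3]{XZ}, pulled back along $\mathrm{Stack}_{\mathrm{Hecke},G,2,I}\rightarrow\mathrm{Stack}_{\mathrm{Hecke},G,I}$.

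\noindent Third, for the well-definedness of the operator and the claim that its image lands in the $M_{K,2}$-equivariant subcategory $D(\mathrm{Bundle}_{G,2})_{\text{motivic},\blacksquare}$, I would argue exactly as in \cref{theorem17}: by the motivic analogue of \cite[Corollary 2.3 in Chapter IX]{FS} one fuses the legs over varying $I$ and reduces to $I$ a singleton, compares $D(\mathrm{Bundle}_{G,2})_{\text{motivic},\blacksquare}$ with $D(\mathrm{Bundle}_{G,2}\times X_{\mathcal C})_{\text{motivic},\blacksquare}$ for $\mathcal C$ the $p$-adic or $z$-adic complex field, and then uses the functoriality of the whole construction along $\mathrm{Bundle}_{G,2}\rightarrow\mathrm{Bundle}_{G}$ to reduce the equivariance statement to the $W_{K,2}$-equivariance already established in the $\ell$-adic setting, using that the motivic realization is compatible with the $W_{K,2}$-action.

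\noindent The hard part will be points (i) and (ii) of the second stage: establishing that the motivic six-functor formalism of Ayoub and Cisinski-D\'eglise genuinely extends to the $v$-topology on perfectoid spaces and to the Artin $v$-stacks $\mathrm{Bundle}_{G,2}$ and $\mathrm{Stack}_{\mathrm{Hecke},G,2,I}$ with the finiteness (\emph{propert\'e lisse}) needed for $\mathrm{pushforward}_{f_B}$ to preserve the lisse subcategory, and, intertwined with this, that the motivic realization of the category of \cite{XZ} produces a monoidal assignment $R\mapsto\mathcal{F}_R$ compatible with fusion, i.e.\ a \emph{motivic Satake formalism}. This last point is genuinely open and here plays the role that the conjectural $v$-sheaf six-functor formalism plays for the Robba-Frobenius variant of Setting (2); accordingly I would phrase the full statement conditionally on it, exactly as is done for that setting above.
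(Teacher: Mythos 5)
Your proposal follows essentially the same route as the paper's own argument: pulling $\mathcal{F}_R$ back along $\mathrm{Stack}_{\mathrm{Hecke},G,2,I}\rightarrow\mathrm{Stack}_{\mathrm{Hecke},G,I}$ via the (motivic) Satake formalism of \cite{FS}, \cite{RS}, \cite{Scho1}, reducing by induction to the case of a singleton $I$ under the 6-functor conjecture, comparing $D(\mathrm{Bundle}_{G,2})_{\text{motivic},\blacksquare}$ with $D(\mathrm{Bundle}_{G,2}\times X_{\mathcal{C}})_{\text{motivic},\blacksquare}$, and deducing $M_{K,2}$-equivariance from the commutative diagram with the usual Hecke operator via the functoriality of \cite{Scho1}. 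You are, if anything, more explicit than the paper in isolating the conditional ingredients (the motivic six-functor formalism on these Artin $v$-stacks and the motivic Satake assignment $R\mapsto\mathcal{F}_R$), which is consistent with the paper's own caveat ``after we assume the conjecture on the 6-functors.''
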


\begin{remark}
The motivic Galois group in \cite{Scho1} admits a quotient map to the motivic Galois group of $K$, which allows one to define the corresponding two fold covering of the motivic Galois group. In our notation it will be the corresponding fibre product of $W_{K,2}$ and $M_K$, for instance over $G_{K}$. 
\end{remark}

\begin{theorem} \mbox{\textbf{(After Scholze, \cite{Scho1}, \cite{Scho2})}}
For any finite extension $K$ over $\mathbb{Q}_p$ or $\mathbb{F}_q((z))$ with $\ell\neq p$, $p>2$ we have well-defined Schur-irreducible sheaves (i.e. specialize to a Schur-irreducible object for each $\ell$) in the corresponding derived $\infty$-category $D(\mathrm{Bundle}_{G,2})_{\text{motivic},\blacksquare}$ which are containing all the corresponding Schur-irreducible sheaves in the corresponding derived $\infty$-category $D(\mathrm{Bundle}_{G})_{\text{motivic},\blacksquare}$ in the usual setting in \cite{Scho1} in certain functorial manner, while the latters are related directly to the corresponding smooth representations of the corresponding reductive group $G(K)$ by considering the Bernstein center, i.e. the motivic $G(K)$-equivariant sheaves.  
\end{theorem}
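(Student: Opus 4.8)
The plan is to run the same argument as in the $\ell$-adic and Robba--Frobenius cases, now with Scholze's motivic $\infty$-category of \cite{Scho1}, \cite{Scho2} in place of the lisse $\ell$-adic and Kedlaya--Liu solid categories. First I would recall the construction of $D(\mathrm{Bundle}_{G,2})_{\text{motivic},\blacksquare}$: since $\mathrm{Bundle}_{G,2}$ is realized, as in the Artin $v$-stack theorem above, as the fiber product of $\mathrm{Bundle}_{G}$ with $\mathrm{Stack}_{\mathrm{FF},2}$ over $\mathrm{Stack}_{\mathrm{FF}}$, the motivic category is obtained by pulling back the Scholze motivic category $D(\mathrm{Bundle}_{G})_{\text{motivic},\blacksquare}$ along $p\colon\mathrm{Bundle}_{G,2}\to\mathrm{Bundle}_{G}$, exactly as $D(\mathrm{Bundle}_{G,2})_{\text{lisse},\blacksquare,A}$ and $D(\mathrm{Bundle}_{G,2})_{\text{KL},\mathrm{lisse},\blacksquare,\widetilde{C}_\blacksquare}$ were produced earlier. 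The symmetric-monoidal pullback $p^{\ast}$ then supplies the asserted functorial embedding of the Schur-irreducible objects of $D(\mathrm{Bundle}_{G})_{\text{motivic},\blacksquare}$ into $D(\mathrm{Bundle}_{G,2})_{\text{motivic},\blacksquare}$.

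Next I would pin down Schur-irreducibility in the motivic setting following the parenthetical in the statement: a motivic object is Schur-irreducible if for every prime $\ell\neq p$ its $\ell$-adic realization is Schur-irreducible in $D(\mathrm{Bundle}_{G,2})_{\text{lisse},\blacksquare,A}$ in the sense of \cite{FS}. The realization functors $\rho_{\ell}$ form a family compatible both with $p^{\ast}$ and with the $\ell$-adic theory built above; combining the Schur-irreducible objects of $D(\mathrm{Bundle}_{G,2})_{\text{lisse},\blacksquare,A}$ from the $\ell$-adic theorem above with the motivic lifting mechanism of \cite{Scho1} (the Tannakian and weight-structure formalism of \cite{A2}, \cite{A3}, \cite{A4}, \cite{V1}, \cite{RS} producing a single motivic object that realizes a prescribed compatible system), one obtains the required motivic Schur-irreducible sheaves. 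The containment assertion then follows because $\rho_{\ell}$ intertwines $p^{\ast}$ with the $\mathrm{Bundle}_{G}$-level pullback, so the $\ell$-realization of $p^{\ast}$ of a \cite{Scho1}-object over $\mathrm{Bundle}_{G}$ is exactly the already constructed object over $\mathrm{Bundle}_{G,2}$.

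For the link with smooth representations of $G(K)$ I would invoke the motivic incarnation of the Bernstein-center formalism: as in \cite{VLa} and \cite[Chapter VIII Theorem 4.1, Chapter IX Proposition 4.1]{FS}, transported to Scholze's motivic category in \cite{Scho1}, \cite{Scho2}, the center of $D(\mathrm{Bundle}_{G})_{\text{motivic},\blacksquare}$ receives the Bernstein center, so that smooth $G(K)$-representations with coefficients compatible with the motivic ones correspond to the motivic $G(K)$-equivariant sheaves supported on the locus corresponding to $b=1$ in the Kottwitz set $B(G)$, namely $[\ast/\underline{G(K)}]\hookrightarrow\mathrm{Bundle}_{G}$. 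Pulling this datum back along $p$ and using the motivic Hecke operators attached to the motivic Cartier stack $\mathrm{Stack}_{\mathrm{Cartier},M_{K,2}}$ produces the corresponding picture over $\mathrm{Bundle}_{G,2}$, which assembles into the statement.

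The main obstacle will be the foundational part, namely that $D(\mathrm{Bundle}_{G,2})_{\text{motivic},\blacksquare}$ genuinely supports a well-behaved solid six-functor formalism over the $v$-site of Banach (perfectoid) rings, with the family $\{\rho_{\ell}\}_{\ell\neq p}$ jointly conservative and compatible with proper pushforward, smooth pullback and $\blacksquare$-completion. Concretely one must control the motivic formalism under the smooth base change $\mathrm{Spd}\overline{\mathbb{F}}_p((k^{1/p^{\infty}}))\to\mathrm{Spd}\overline{\mathbb{F}}_p$ and under adjoining the square root $t^{1/2}$ to the period sheaves, and one must check that ``Schur-irreducible for each $\ell$'' is non-vacuous and stable, i.e.\ preserved by the motivic Hecke operators and realizable by a single motivic object for all $\ell$ simultaneously; this last point is exactly where the still-conjectural content of Scholze's motivic local Langlands program in \cite{Scho1}, \cite{Scho2} enters.
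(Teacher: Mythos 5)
The paper never actually supplies a proof of this theorem: it is stated twice in the motivic chapter, both times without a proof environment, and the closest analogues in Chapter 2 are discharged only with ``putting all we have proved in this section together.'' Your reconstruction follows the same architecture the paper uses everywhere else --- define the mixed-parity category by pullback along $\mathrm{Bundle}_{G,2}\to\mathrm{Bundle}_{G}$ (via the fiber product over $\mathrm{Stack}_{\mathrm{FF}}$), read ``Schur-irreducible'' through the $\ell$-adic realization functors, and route the link to smooth $G(K)$-representations through the Lafforgue/Fargues--Scholze Bernstein-center formalism on the $b=1$ stratum --- so it is consistent with, and considerably more explicit than, anything in the source. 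You are also right to flag that the joint conservativity of the realizations, the solid six-functor formalism for motivic sheaves on these $v$-stacks, and the existence of a single motivic object realizing a prescribed compatible system are precisely the unproved inputs; the paper itself concedes this by demoting the substantive generalization to the Conjecture immediately following the theorem, so your proposal should be read as a proof sketch conditional on those inputs rather than a complete argument.
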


\begin{conjecture} \mbox{\textbf{(After Scholze, \cite{Scho1}, \cite{Scho2})}}
 For any finite extension $K$ over $\mathbb{Q}_p$ or $\mathbb{F}_q((z))$ with $\ell\neq p$, $p>2$ we have well-defined Schur-irreducible sheaves in the corresponding derived $\infty$-category $D(\mathrm{Bundle}_{G,2})_{\text{motivic},\blacksquare}$ which are containing all the corresponding Schur-irreducible sheaves in the corresponding derived $\infty$-category $D(\mathrm{Bundle}_{G})_{\text{motivic},\blacksquare}$ in the usual setting in \cite{Scho1} in certain functorial manner, while the latters are related directly to the corresponding smooth representations of the corresponding reductive group $G(K)$ by considering the Bernstein center, i.e. the motivic $G(K)$-equivariant sheaves which can be specialized to $G(K)$-representations over $\overline{\mathbb{Q}}_\ell$ for each $\ell$. We then conjecture we can generalize \cite{Scho1} in some well-defined sense to our generalized setting, by considering the $M_{K,2}$-group. 
 \end{conjecture}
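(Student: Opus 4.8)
The plan is to follow the same architecture used for the $\ell$-adic and Robba--Frobenius incarnations above, transported into Scholze's motivic framework. First I would establish that the derived $\infty$-category $D(\mathrm{Bundle}_{G,2})_{\text{motivic},\blacksquare}$ is well-defined: by the earlier theorem $\mathrm{Bundle}_{G,2}$ is a small Artin $v$-stack realized as the fiber product $\mathrm{Bundle}_{G}\times_{\mathrm{Stack}_{\mathrm{FF}}}\mathrm{Stack}_{\mathrm{FF},2}$, so one applies the motivic $v$-sheaf machinery of \cite{Scho1} (built on \cite{A2}, \cite{A3}, \cite{A4}) to this stack exactly as was done in the proof that $D(\mathrm{Bundle}_{G,2})_{\text{etale},A}$ and $D(\mathrm{Bundle}_{G,2})_{\text{lisse},\blacksquare,A}$ are well-defined. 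The smooth chart $\mathrm{Stack}_{b,2}\to\mathrm{Bundle}_{G,2}$ obtained by base change from $\mathrm{Stack}_b\to\mathrm{Bundle}_G$ provides the local model on which the motivic sheaves are defined, and $v$-descent for motivic sheaves glues them into the global category.

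Next I would produce the Schur-irreducible objects and the functorial embedding. The morphism $\mathrm{Bundle}_{G,2}\to\mathrm{Bundle}_G$ is, by its fiber-product description over $\mathrm{Stack}_{\mathrm{FF},2}\to\mathrm{Stack}_{\mathrm{FF}}$, sufficiently well-behaved (representable, and ``smooth'' in the relevant $v$-stacky sense) that pullback along it is $t$-exact and symmetric monoidal on motivic sheaves; hence it sends a Schur-irreducible object of $D(\mathrm{Bundle}_{G})_{\text{motivic},\blacksquare}$ --- which by \cite{Scho1} corresponds via the motivic Bernstein center to a motivic $G(K)$-equivariant sheaf, specializing for each $\ell$ to a $G(K)$-representation over $\overline{\mathbb{Q}}_\ell$ --- to a Schur-irreducible object on $\mathrm{Bundle}_{G,2}$ carrying the additional $W_{K,2}$-, hence $M_{K,2}$-, equivariance data coming from the covering. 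This gives the claimed functorial containment.

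Then, to obtain the generalization of \cite{Scho1} itself, I would feed these Schur-irreducible objects into the motivic Hecke operators constructed in the preceding theorem: for each representation $R$ of $G^\mathrm{Lan}$ the operator $\mathrm{Hecke}(\square)=\mathrm{pushforward}_{f_B}(\mathrm{pullback}_{f_A}\square\otimes_{\blacksquare,\mathrm{complete}}\mathcal{F}_R)$ lands in the $M_{K,2}$-equivariant subcategory, and running the Lafforgue--Fargues--Scholze excursion-operator formalism (\cite[Chapter VIII Theorem 4.1, Chapter IX Proposition 4.1]{FS}, literally after \cite{VLa}), now for the pro-algebraic motivic group $M_{K,2}$ in place of $W_{K,2}$, yields a morphism from excursion data to the motivic Bernstein center $\mathrm{BerCenter}_{D(\mathrm{Bundle}_{G,2})_{\text{motivic},\blacksquare}}$ and hence the desired $M_{K,2}$-parameterization. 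One then checks that specialization at each prime $\ell$, using that the motivic realization functors are monoidal and commute with the Hecke operators, recovers the statement of the $\ell$-adic mixed-parity theorem and is compatible with the Fargues--Scholze correspondence.

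The hard part --- and the reason this stays a conjecture --- is twofold. First, the motivic six-functor formalism and the motivic Satake equivalence underpinning the Hecke operators are themselves only conjectural in \cite{Scho1}: the sheaves $\mathcal{F}_R$ and the geometric Satake equivalence must be produced for $\mathrm{Bundle}_{G,2}$ \emph{inside} the motivic $v$-sheaf category, not merely $\ell$-adically, so well-definedness of $\mathrm{Hecke}(\square)$ is not unconditional. Second, the analytification obstruction flagged in the text applies with full force: motivic sheaf theory over the analytic stacks $\mathrm{Stack}_{\mathrm{FF},2}$ does not obviously descend from any algebraic model, and joining $t^{1/2}$ to the period sheaves introduces exactly the kind of functional-analytic subtlety that has no algebraic counterpart, so even constructing $D(\mathrm{Bundle}_{G,2})_{\text{motivic},\blacksquare}$ with full functoriality requires new foundations. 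Absent these inputs one can at best prove the conditional statement: granting the motivic Satake and motivic six-functor formalism, the argument sketched above applies verbatim.
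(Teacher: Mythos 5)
The statement you are addressing is stated in the paper as a \emph{conjecture}, and the paper supplies no proof of it; the surrounding text only records the analogous $\ell$-adic and arithmetic $D$-module cases and defers the motivic case to \cite{Scho1}. Your proposal is therefore not in competition with any argument in the paper: what you have written is a conditional reduction that mirrors exactly the architecture the paper uses where it does give proofs --- realize $\mathrm{Bundle}_{G,2}$ as a fiber product over $\mathrm{Stack}_{\mathrm{FF},2}\rightarrow\mathrm{Stack}_{\mathrm{FF}}$, pull back Schur-irreducible objects along $\mathrm{Bundle}_{G,2}\rightarrow\mathrm{Bundle}_{G}$, and run the Lafforgue--Fargues--Scholze excursion formalism into the Bernstein center with $M_{K,2}$ in place of $W_{K,2}$. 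You also correctly identify why the statement must remain conjectural: the motivic six-functor formalism and motivic Satake input over these analytic $v$-stacks (and the $t^{1/2}$-extension of the period sheaves) are themselves unavailable, so at best one obtains the conditional statement. This matches the paper's own stance, which is precisely to leave the claim as a conjecture; your write-up is a reasonable elaboration of the intended route rather than a proof, and it would be a gap only if you claimed it as an unconditional proof, which you do not.
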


\begin{definition}
For the current setting we have the corresponding Hecke stacks $\mathrm{Stack}_{\mathrm{Hecke},G,2,I}$ for any finite set $I$. This Hecke stack can be defined by taking the corresponding pull-back along the corresponding morphism from $\mathrm{Bundle}_{G,2}$ to the corresponding stack $\mathrm{Bundle}_{G}$. We have then the corresponding morphisms in the following:
\begin{align}
f_A: \mathrm{Stack}_{\mathrm{Hecke},G,2,I} \rightarrow \mathrm{Bundle}_{G,2}
\end{align}
with
\begin{align}
f_B: \mathrm{Stack}_{\mathrm{Hecke},G,2,I} \rightarrow \mathrm{Bundle}_{G,2}\times \Pi_I \mathrm{Stack}_{\mathrm{Cartier},M_{K,2}}
\end{align}
with
\begin{align}
f_C: \mathrm{Stack}_{\mathrm{Hecke},G,2,I} \rightarrow \mathrm{Bundle}_{G,2}\times \Pi_I \mathrm{Stack}_{\mathrm{Cartier},M_{K,2}}\rightarrow \mathrm{Bundle}_{G,2}\times \mathrm{Stack}_{\mathrm{classify},\Pi_I M_{K,2}}.
\end{align}
For any representation $R$ of the Langlands group $G^\mathrm{Lan}$ we define the corresponding Hecke operator $\mathrm{Hecke}(\square)$ in mixed-parity setting by using the categoricalized Fourier-transformation fashion functor by first taking the corresponding pushforward along $f_A$ of any complex then take the condensed tensor product with the sheaf corresponding to $R$ over the Hecke stack, then take the corresponding push-forward along:
\begin{align}
f_B: D(\mathrm{Stack}_{\mathrm{Hecke},G,2,I})_{\text{motivic},\blacksquare} \rightarrow D(\mathrm{Bundle}_{G,2}\times \Pi_I \mathrm{Stack}_{\mathrm{Cartier},M_{K,2}})_{\text{motivic},\blacksquare}.
\end{align}
Namely we use the notation $\mathrm{Hecke}(\square)$ for:
\begin{align}
\mathrm{pushforward}_{f_B}(\mathrm{pullback}_{f_A}\square\otimes_{\blacksquare,\mathrm{complete}}\mathcal{F}_R).
\end{align}
\end{definition}

\begin{definition}
In the current setting, we can define the corresponding $\infty$-categorical Hecke eigensheaves as in the usual geometric Langlands as well. Those are the complexes which are retained as external tensor product with $\mathcal{F}_R$.
\end{definition}

\indent Then we can formulate the following theorems:
\begin{theorem}
For the current setting we have the corresponding Hecke stacks $\mathrm{Stack}_{\mathrm{Hecke},G,2,I}$ for any finite set $I$. This Hecke stack can be defined by taking the corresponding pull-back along the corresponding morphism from $\mathrm{Bundle}_{G,2}$ to the corresponding stack $\mathrm{Bundle}_{G}$. We have then the corresponding morphisms in the following:
\begin{align}
f_A: \mathrm{Stack}_{\mathrm{Hecke},G,2,I} \rightarrow \mathrm{Bundle}_{G,2}
\end{align}
with
\begin{align}
f_B: \mathrm{Stack}_{\mathrm{Hecke},G,2,I} \rightarrow \mathrm{Bundle}_{G,2}\times \Pi_I \mathrm{Stack}_{\mathrm{Cartier},M_{K,2}}
\end{align}
with
\begin{align}
f_C: \mathrm{Stack}_{\mathrm{Hecke},G,2,I} \rightarrow \mathrm{Bundle}_{G,2}\times\Pi_I  \mathrm{Stack}_{\mathrm{Cartier},M_{K,2}}\rightarrow \mathrm{Bundle}_{G,2}\times \mathrm{Stack}_{\mathrm{classify},\Pi_I M_{K,2}}.
\end{align}
The corresponding Hecke operation in this setting is well-defined. The corresponding image lies in the corresponding $M_{K,2}$-invariant sub-derived $\infty$-category:
\begin{align}
D(\mathrm{Bundle}_{G,2})_{\text{motivic},\blacksquare}.
\end{align}
For any representation $R$ of the Langlands group $G^\mathrm{Lan}$ we define the corresponding Hecke operator $\mathrm{Hecke}(\square)$ in mixed-parity setting by using the categoricalized Fourier-transformation fashion functor by first taking the corresponding pushforward along $f_A$ of any complex then take the condensed tensor product with the sheaf corresponding to $R$ over the Hecke stack, then take the corresponding push-forward along:
\begin{align}
f_B: D(\mathrm{Stack}_{\mathrm{Hecke},G,2,I})_{\text{motivic},\blacksquare} \rightarrow D(\mathrm{Bundle}_{G,2}\times \Pi_I \mathrm{Stack}_{\mathrm{Cartier},M_{K,2}})_{\text{motivic},\blacksquare}.
\end{align}
Namely we use the notation $\mathrm{Hecke}(\square)$ for:
\begin{align}
\mathrm{pushforward}_{f_B}(\mathrm{pullback}_{f_A}\square\otimes_{\blacksquare,\mathrm{complete}}\mathcal{F}_R).
\end{align}
\end{theorem}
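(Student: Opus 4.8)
The plan is to transport the proof of \cref{theorem17} into Scholze's motivic framework of \cite{Scho1}, \cite{Scho2}, using two facts: that the construction of $\mathrm{Bundle}_{G,2}$ as a fibre product of $\mathrm{Bundle}_G$ with $\mathrm{Stack}_{\mathrm{FF},2}$ over $\mathrm{Stack}_{\mathrm{FF}}$ is insensitive to the coefficient theory, so the Hecke stack $\mathrm{Stack}_{\mathrm{Hecke},G,2,I}$ and the morphisms $f_A$, $f_B$, $f_C$ are obtained by the very same base changes as in the $\ell$-adic and Robba settings; and that the motivic derived $\infty$-category $D(-)_{\text{motivic},\blacksquare}$ realizes, for every $\ell\neq p$, to the $\ell$-adic category $D(-)_{\text{lisse},\blacksquare,\overline{\mathbb{Q}}_\ell}$ of \cite{FS}, with the family of these realizations conservative. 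First I would record that $f_A$ remains small and cohomologically smooth, and $f_B$ the relevant finiteness, as geometric properties of the underlying $v$-stacks that the motivic sheaf theory of \cite{Scho1} respects, so that the motivic six functors supply the operations appearing in the statement.

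Next I would construct the Satake sheaf $\mathcal{F}_R$. The key input is a motivic refinement of geometric Satake: following \cite{Scho1}, \cite[Section 3]{XZ}, and \cite{A2}, \cite{A3}, \cite{A4}, any representation $R$ of $G^{\mathrm{Lan}}$ determines a motivic perverse sheaf on the mixed-parity local Hecke stack whose $\ell$-adic realizations recover the usual Satake sheaves of \cite{FS} for all $\ell$. Pulling this back along $\mathrm{Stack}_{\mathrm{Hecke},G,2,I}\to\mathrm{Stack}_{\mathrm{Hecke},G,I}$ yields the $\mathcal{F}_R$ used in the statement, and $\mathrm{pushforward}_{f_B}(\mathrm{pullback}_{f_A}\square\otimes_{\blacksquare,\mathrm{complete}}\mathcal{F}_R)$ then defines $\mathrm{Hecke}(\square)$ on $D(\mathrm{Bundle}_{G,2})_{\text{motivic},\blacksquare}$.

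Then I would prove well-definedness and fusion compatibility. As in \cite[Corollary 2.3, Chapter IX]{FS} a fusion/excursion argument reduces the structure of the operators over $\Pi_I M_{K,2}$ to the case $|I|=1$, comparing $D(\mathrm{Bundle}_{G,2})_{\text{motivic},\blacksquare}$ with $D(\mathrm{Bundle}_{G,2}\times X_{\mathcal{C}})_{\text{motivic},\blacksquare}$ for $\mathcal{C}$ the $p$-adic (or $z$-adic) complex number field; the motivic fusion isomorphisms are obtained by specializing the $\ell$-adic ones of \cite{FS} and invoking conservativity of the realizations. For the assertion that the image lies in the $M_{K,2}$-invariant subcategory, I would combine the $W_{K,2}$-equivariance already established in \cref{theorem17} with the $M_K$-equivariance from Scholze's motivic local Langlands \cite{Scho1}: since $M_{K,2}$ is the fibre product of $W_{K,2}$ and $M_K$ over $G_K$ (the remark following the theorem), an object is $M_{K,2}$-equivariant precisely when it is simultaneously $W_{K,2}$- and $M_K$-equivariant in a compatible way, and both have been arranged.

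The hard part will be the motivic Satake input: a clean construction of $\mathcal{F}_R$ requires that Scholze's motivic formalism supports the nearby-cycles and hyperbolic-localization constructions on the $B_{\mathrm{dR},2}$-Grassmannian and that the resulting symmetric monoidal functor matches the $\ell$-adic one after each realization. Absent a fully developed motivic six-functor formalism on these analytic $v$-stacks, one either assumes it (as the conjectural statements in the excerpt do in the Robba case) or bootstraps it from the $\ell$-adic theorems by a conservativity argument over all $\ell$. A secondary, foundational obstacle is to check that adjoining $t^{1/2}$ and passing to the two-fold cover $\mathrm{Stack}_{\mathrm{FF},2}$ is compatible with the motivic coefficients, i.e. that $M_{K,2}$ genuinely acts on the enlarged period sheaves — which is the role of the auxiliary finite extension $*$ fixed in the earlier definitions.
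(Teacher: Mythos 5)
Your proposal follows essentially the same route as the paper's own proof: obtain $\mathcal{F}_R$ by pulling back the Satake sheaf along $\mathrm{Stack}_{\mathrm{Hecke},G,2,I}\rightarrow\mathrm{Stack}_{\mathrm{Hecke},G,I}$, reduce to singleton $I$ as in \cite[Corollary 2.3 in Chapter IX]{FS} (modulo the 6-functor conjecture), compare $D(\mathrm{Bundle}_{G,2})_{\text{motivic},\blacksquare}$ with $D(\mathrm{Bundle}_{G,2}\times X_\mathcal{C})_{\text{motivic},\blacksquare}$, and deduce equivariance of the image from a commutative diagram with the usual Hecke operator via the functoriality of \cite{Scho1}. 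Your added points — conservativity of the $\ell$-adic realizations and the fibre-product description of $M_{K,2}$ for the equivariance claim — are refinements consistent with the paper's argument, and the gaps you flag (motivic six functors on these analytic $v$-stacks, motivic Satake) are indeed assumed rather than proved in the paper as well.
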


\begin{proof}
This relies on the existing Satake isomorphisms as in \cite{FS}, \cite{RS}, \cite{Scho1}. Here the analog of the Satake formalism in our setting from \cite{FS} actually goes in the sense that we can pull any sheaf $\mathcal{F}_R$ associated with a representation $R$ of the Langlands dual group along the morphisms:
\begin{align}
\mathrm{Stack}_{\mathrm{Hecke},G,2,I} \rightarrow \mathrm{Stack}_{\mathrm{Hecke},G,I}.
\end{align} 
The corresponding well-definedness are directly from the corresponding machinery in \cite{FS} constructed for all generalized $v$-stacks. As in \cite[Corollary 2.3 in Chapter IX]{FS} (after we assume the conjecture on the 6-functors) one can use induction to further reduce to the situation where $I$ is singleton. Then in this case one compares the two categories:
\begin{align}
D(\mathrm{Bundle}_{G,2})_{\text{motivic},\blacksquare},D(\mathrm{Bundle}_{G,2}\times X_\mathcal{C})_{\text{motivic},\blacksquare}
\end{align}
where $C$ is the $z$-adic or $p$-adic complex number field in the first situation. In the motivic situation, one can also prove this by using the corresponding functoriality from \cite{Scho1} by using the results in the usual situation in \cite{Scho1}. Namely we look at the corresponding Hecke operator in the mixed-parity situation:
\begin{align}
D(\mathrm{Bundle}_{G,2})_{\text{motivic},\blacksquare} \rightarrow D(\mathrm{Bundle}_{G,2}\times \Pi_I \mathrm{Stack}_{\mathrm{Cartier},M_{K,2}})_{\text{motivic},\blacksquare}
\end{align}
with the usual Hecke operator in \cite{FS}:
\begin{align}
D(\mathrm{Bundle}_{G})_{\text{motivic},\blacksquare} \rightarrow D(\mathrm{Bundle}_{G}\times \Pi_I \mathrm{Stack}_{\mathrm{Cartier},W_{K}})_{\text{motivic},\blacksquare}
\end{align}
where they form a commutative diagram by using the functoriality from the \cite{Scho1}. Then we can get the well-definedness of this map with the desired image lying in the category of $M_{K,2}$-equivariant objects. 
\end{proof}

\begin{theorem} \mbox{\textbf{(After Scholze, \cite{Scho1}, \cite{Scho2})}}
For any finite extension $K$ over $\mathbb{Q}_p$ or $\mathbb{F}_q((z))$ with $\ell\neq p$, $p>2$ we have well-defined Schur-irreducible sheaves in the corresponding derived $\infty$-category $D(\mathrm{Bundle}_{G,2})_{\text{motivic},\blacksquare}$ which are containing all the corresponding Schur-irreducible sheaves in the corresponding derived $\infty$-category $D(\mathrm{Bundle}_{G})_{\text{motivic},\blacksquare}$ in the usual setting in \cite{Scho1} in certain functorial manner, while the latters are related directly to the corresponding smooth representations of the corresponding reductive group $G(K)$ by considering the Bernstein center, i.e. the motivic $G(K)$-equivariant sheaves which can be specialized to $G(K)$-representations over $\overline{\mathbb{Q}}_\ell$ for each $\ell$. 
\end{theorem}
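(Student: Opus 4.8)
The plan is to transport into Scholze's motivic framework \cite{Scho1} the same strategy already carried out for the $\ell$-adic and Robba--Frobenius incarnations above. First I would set up $D(\mathrm{Bundle}_{G,2})_{\text{motivic},\blacksquare}$ as the base change of $D(\mathrm{Bundle}_{G})_{\text{motivic},\blacksquare}$ along the morphism of small Artin $v$-stacks $\mathrm{Bundle}_{G,2}\rightarrow\mathrm{Bundle}_{G}$, using the presentation of $\mathrm{Bundle}_{G,2}$ as the fiber product $\mathrm{Bundle}_{G}\times_{\mathrm{Stack}_{\mathrm{FF}}}\mathrm{Stack}_{\mathrm{FF},2}$ established when we proved $\mathrm{Bundle}_{G,2}$ is an Artin $v$-stack. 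Concretely this means applying the motivic $\infty$-sheaf formalism of \cite{Scho1} (see also \cite{A2}, \cite{A3}, \cite{A4}, \cite{RS}) to the $v$-site of perfectoid spaces over $\mathrm{Spd}\overline{\mathbb{F}}_p\otimes_{\mathbb{F}_p}\mathbb{Q}_p(\mu_{p^\infty})^{\wedge,\flat}$ underlying the two-fold covering stack, and taking the lisse-$\blacksquare$ subcategory generated by the finitely generated locally free motivic sheaves. The point that makes this well-posed is that for each $\ell\neq p$ this motivic category specializes to the category $D(\mathrm{Bundle}_{G,2})_{\text{lisse},\blacksquare,\overline{\mathbb{Q}}_\ell}$ of the earlier theorem, compatibly with pullback along $\mathrm{Bundle}_{G,2}\rightarrow\mathrm{Bundle}_{G}$.

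I would then identify the Schur-irreducible objects following \cite{Scho1}: an object of $D(\mathrm{Bundle}_{G,2})_{\text{motivic},\blacksquare}$ is Schur-irreducible when each of its $\ell$-adic specializations is Schur-irreducible. Since the pullback functor $D(\mathrm{Bundle}_{G})_{\text{motivic},\blacksquare}\rightarrow D(\mathrm{Bundle}_{G,2})_{\text{motivic},\blacksquare}$ is fully faithful on locally free lisse objects --- which follows because $\mathrm{Stack}_{\mathrm{FF},2}\rightarrow\mathrm{Stack}_{\mathrm{FF}}$ is a smooth base change and the cohomology of the structure sheaf of the fiber, computed through the extended Robba sheaves with $t^{1/2}$ adjoined over the finite extension $*$ of \cite{BS}, contributes only the unit --- the image of a Schur-irreducible object remains Schur-irreducible. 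This gives the functorial embedding in exactly the same fashion as the $\ell$-adic statement, and one then extends it carrying along the motivic equivariance structure for $M_{K,2}$, which as noted in the remark is the fiber product of $W_{K,2}$ with the motivic Galois group $M_K$ over $G_K$.

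For the Bernstein-center half, I would apply the formalism of \cite{VLa} in the form \cite[Chapter VIII Theorem 4.1, Chapter IX Proposition 4.1]{FS}, which is valid for any $\mathbb{Z}_\ell$-linear $\infty$-category; applied to each $\ell$-adic specialization, together with the motivic Hecke operators $\mathrm{Hecke}(\square)$ constructed in the motivic Hecke theorem just above, it produces via the geometric Bernstein center the map from smooth $G(K)$-representations into the motivic $G(K)$-equivariant sheaves. Because the Hecke operators on $\mathrm{Bundle}_{G,2}$ are pulled back along $f_A, f_B, f_C$ from those on $\mathrm{Bundle}_{G}$, the whole picture is compatible with the embedding of the previous paragraph. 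The main obstacle is foundational rather than formal: one needs the motivic six-functor formalism of \cite{Scho1} --- motivic $\infty$-sheaves over the $v$-site of Banach rings, their localization to the Cartier stack, and the Endlichkeitssatz for lisse motivic complexes --- to be available on the two-fold covering stack with $t^{1/2}$ adjoined, and one needs the $\ell$-adic specializations to be simultaneously compatible across all $\ell$. Establishing that compatibility, i.e. that the construction genuinely descends to the motivic level rather than merely existing $\ell$-by-$\ell$, is the hard step, which is why the stronger assertion appears only as a conjecture in the excerpt.
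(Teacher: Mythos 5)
The paper never actually proves this statement: it is asserted twice in the motivic chapter with no proof environment following either occurrence, and the closest analogues elsewhere (the $\ell$-adic and Robba--Frobenius versions) are disposed of with the single sentence ``Putting all we have proved in this section we have the theorem follows.'' So your proposal cannot be measured against the paper's argument in any meaningful way; it is strictly more detailed than the source, and its overall architecture --- define $D(\mathrm{Bundle}_{G,2})_{\text{motivic},\blacksquare}$ by pulling back along the fiber-product presentation of $\mathrm{Bundle}_{G,2}$, detect Schur-irreducibility through $\ell$-adic specializations, and run the Lafforgue--Fargues--Scholze Bernstein-center formalism specialization by specialization --- is exactly the pattern the paper gestures at in the surrounding definitions and in the Hecke-operator theorem. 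Your closing observation, that simultaneous compatibility across all $\ell$ is the genuine obstruction and is why the stronger statement is only a conjecture, matches the theorem/conjecture split in the text.

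There is, however, one concrete gap in your own argument. You claim the pullback functor is fully faithful on locally free lisse objects because ``the cohomology of the structure sheaf of the fiber contributes only the unit,'' and you use this to conclude that Schur-irreducible objects stay Schur-irreducible. For a genuine two-fold covering $\pi$, the pushforward of the unit generically splits as the unit plus a nontrivial rank-one summand attached to the sign character of the deck group, so $\mathrm{End}(\pi^*\mathcal{F})$ acquires an extra direct summand $\mathrm{Hom}(\mathcal{F},\mathcal{F}\otimes L)$ and full faithfulness fails; whether the Kummer-type covering obtained by adjoining $t^{1/2}$ over the extension $*$ of \cite{BS} kills that summand is precisely what you would need to check, and you do not. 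Note that the theorem as stated only asks for a functorial \emph{containment} of the old Schur-irreducible objects in the new category, not preservation of endomorphism rings, so you could either weaken your claim to match the statement or replace the full-faithfulness step by passing to the deck-group-isotypic (equivalently $W_{K,2}$-equivariant) component, where the unit really is the only contribution. As written, that step would not survive scrutiny.
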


\begin{conjecture} \mbox{\textbf{(After Scholze, \cite{Scho1}, \cite{Scho2})}}
For any finite extension $K$ over $\mathbb{Q}_p$ or $\mathbb{F}_q((z))$ with $\ell\neq p$, $p>2$ we have well-defined Schur-irreducible sheaves in the corresponding derived $\infty$-category $D(\mathrm{Bundle}_{G,2})_{\text{motivic},\blacksquare}$ which are containing all the corresponding Schur-irreducible sheaves in the corresponding derived $\infty$-category $D(\mathrm{Bundle}_{G})_{\text{motivic},\blacksquare}$ in the usual setting in \cite{Scho1} in certain functorial manner, while the latters are related directly to the corresponding smooth representations of the corresponding reductive group $G(K)$ by considering the Bernstein center, i.e. the motivic $G(K)$-equivariant sheaves which can be specialized to $G(K)$-representations over $\overline{\mathbb{Q}}_\ell$ for each $\ell$. Then we conjecture we can in some well-defined way generalize \cite{Scho1} to our current setting by using the group $M_{K,2}$. Here $M_{K,2}$ is the fiber product of $M_{K}$ and $W_{K,2}$, see \cite{Scho1}, \cite{A2}, \cite{A3}, \cite{A4}. 
\end{conjecture}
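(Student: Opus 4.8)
The plan is to follow the same three-step pattern used above for the $\ell$-adic and Robba--Frobenius settings, now transported through the motivic realization functors of \cite{Scho1}, \cite{Scho2}. \textbf{Step 1 (Well-definedness of the motivic category).} First I would define $D(\mathrm{Bundle}_{G,2})_{\text{motivic},\blacksquare}$ by pulling back the motivic sheaf theory of \cite{Scho1} along $\mathrm{Bundle}_{G,2}\to\mathrm{Bundle}_{G}$, exactly as $D(\mathrm{Bundle}_{G,2})_{\text{lisse},\blacksquare,A}$ was obtained by base change along $\mathrm{Stack}_{\mathrm{FF},2}\to\mathrm{Stack}_{\mathrm{FF}}$. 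Since $\mathrm{Bundle}_{G,2}$ has already been shown to be a small Artin $v$-stack realized as a fiber product $\mathrm{Bundle}_{G}\times_{\mathrm{Stack}_{\mathrm{FF}}}\mathrm{Stack}_{\mathrm{FF},2}$, Scholze's construction of motivic sheaves for Artin $v$-stacks (together with \cite{A2}, \cite{A3}, \cite{A4}, \cite{RS}) applies verbatim, and the solid/condensed enhancement $\blacksquare$ is inherited. One must only check that the square-root adjunction of $t^{1/2}$ (and the auxiliary finite extension $*$ of \cite{BS}) is compatible with the motivic realizations $\ell$ by $\ell$; this is where the two-fold covering is encoded, as a motivic $\mu_2$-gerbe over the usual $\mathrm{Stack}_{\mathrm{FF}}$.

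\textbf{Step 2 (Schur-irreducible objects and the Bernstein center).} Next I would produce the Schur-irreducible objects of $D(\mathrm{Bundle}_{G,2})_{\text{motivic},\blacksquare}$ by pulling back those of $D(\mathrm{Bundle}_{G})_{\text{motivic},\blacksquare}$ along $\mathrm{Bundle}_{G,2}\to\mathrm{Bundle}_{G}$, and check that this pullback is fully faithful on the subcategory of such objects --- equivalently, that the $\mathbb{Z}/2$-covering creates no new endomorphisms --- so that Schur-irreducibility (the endomorphism object specializing to the coefficient field after each $\ell$-adic realization) is preserved while the image acquires the extra $M_{K,2}$-equivariance. The identification of the $\mathrm{Bundle}_{G}$-side objects with motivic $G(K)$-equivariant sheaves, hence with smooth $G(K)$-representations through the motivic Bernstein center, is imported directly from \cite{Scho1}, \cite{Scho2}; the point to verify is that this identification is functorial in the precise sense claimed, i.e. compatible with the pushforward along the covering map.

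\textbf{Step 3 ($M_{K,2}$-parameterization and compatibilities).} Finally I would feed the motivic Hecke operators of the preceding theorem (the motivic analogue of \cref{theorem17}) into the V.~Lafforgue--Fargues--Scholze Bernstein-center formalism --- as in \cite[Chapter VIII Theorem 4.1, Chapter IX Proposition 4.1]{FS}, \cite{VLa} --- now run over the motivic coefficient structure rather than a fixed $\mathbb{Z}_\ell$. After discretizing the relevant pro-object this produces, for each Schur-irreducible complex, a morphism from the cocycles of (open subgroups of a discretization of the wild inertia inside) $M_{K,2}$ into the motivic Bernstein center, hence an $M_{K,2}$-parameter valued in $G^{\mathrm{Lan}}$. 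Compatibility with Scholze's original construction follows from the projection $M_{K,2}\to M_{K}$ coming from the fiber product $M_{K,2}=M_K\times_{G_K} W_{K,2}$, and compatibility with the earlier $W_{K,2}$-theorem follows from specializing the motivic parameter at each $\ell$ to a $W_{K,2}$-parameter, using that motivic realization is monoidal and intertwines the two Hecke formalisms.

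\textbf{Main obstacle.} The genuine difficulty is not the two-fold covering --- that is handled formally by the fiber-product and $\mu_2$-gerbe bookkeeping --- but the motivic input itself: a full six-functor formalism for motivic (solid) sheaves on these analytic and $v$-stacks, together with a motivic Satake equivalence and the existence of the motivic Bernstein center (equivalently, a motivic stack of Langlands parameters whose functions recover the $\ell$-adic Bernstein centers compatibly in $\ell$), remains at the program level in \cite{Scho1}, \cite{Scho2}, \cite{A2}, \cite{A3}, \cite{A4}, \cite{RS}. In particular the assertion that the $\ell$-by-$\ell$ Schur-irreducible specializations assemble into a single motivic object, and that the resulting $W_{K,2}$-parameters lift to one $M_{K,2}$-parameter, is an ``independence of $\ell$''/motivic-lifting statement of essentially the same depth as motivic local Langlands for $\mathrm{GL}_n$; this is why the statement is phrased as a conjecture. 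The cleanest unconditional consequence attainable now is the $\mathrm{GL}$ case and, more generally, the assertion conditional on the motivic Satake and motivic Bernstein-center formalism, in which case Steps 1--3 go through as sketched.
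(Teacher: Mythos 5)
The statement you are addressing is stated in the paper as a \emph{conjecture}, and the paper gives no proof of it: the conjectural content is precisely the final sentence (``we conjecture we can in some well-defined way generalize \cite{Scho1} to our current setting by using the group $M_{K,2}$''), and the paper leaves it at that. So there is no in-paper argument to compare yours against line by line. That said, your proposal is faithful to the methodology the paper uses for the settings it does treat: Step 1 mirrors the paper's construction of $\mathrm{Bundle}_{G,2}$ as the fiber product $\mathrm{Bundle}_{G}\times_{\mathrm{Stack}_{\mathrm{FF}}}\mathrm{Stack}_{\mathrm{FF},2}$ and the pullback definition of the derived categories; Step 2 matches the paper's ``Schur-irreducibles of the covering stack contain those of the base, functorially'' formulation; and Step 3 is exactly the V.~Lafforgue--Fargues--Scholze excursion/Bernstein-center formalism (\cite[Chapter VIII Theorem 4.1, Chapter IX Proposition 4.1]{FS}) run over the motivic coefficients with discretization of the covering group, which is how the paper proves the analogous $W_{K,2}$- and $W^{\mathrm{Tannakian}}_{F_x}$-parameterization theorems. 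Your ``Main obstacle'' paragraph is the most valuable part: you correctly identify that the genuinely missing inputs are the motivic six-functor formalism on these $v$-stacks, the motivic Satake equivalence, and the motivic Bernstein center (independence-of-$\ell$/motivic lifting), and that these are exactly why the paper can only state this as a conjecture conditional on \cite{Scho1}, \cite{Scho2}, \cite{RS}. In short: you have not proved the conjecture (nor could you with currently available inputs), but your conditional reduction is correct, consistent with the paper's framework, and honestly scoped.

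One small caution: your description of the two-fold covering as a ``motivic $\mu_2$-gerbe over $\mathrm{Stack}_{\mathrm{FF}}$'' is an interpretation not present in the paper; the paper realizes the covering concretely by adjoining $t^{1/2}$ to the period sheaves after a finite extension $*$ of the base field, and defines $M_{K,2}$ as the fiber product $M_K\times_{G_K}W_{K,2}$. If you adopt the gerbe language you would need to verify it actually agrees with that concrete construction, in particular that the $\varphi$-action on $t^{1/2}$ and the choice of $*$ do not introduce a nontrivial twist that a bare $\mu_2$-gerbe description would miss.
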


\begin{remark}
Then we can consider the corresponding generalization of this picture further after \cite{FS} and \cite{Scho1}, such as the corresponding spectral action conjecture by considering certain moduli stack on the other side, again after \cite{AI}, \cite{KI} and \cite{KXII}. After \cite{DHKM}, \cite{FS}, \cite{Z}, one can see that the corresponding construction can be directly generalized to our setting by discretization of the two-fold covering Weil groups in the perfectoid situation, namely by using those discrete quotient to form substacks of the full stacks. 
\end{remark}

\newpage
\subsection*{Acknowledgements}
Thanks to both Professor Kedlaya and Professor Sorensen for the corresponding helpful discussion towards the generalized Langlands program, the generalized Langlands dualitization and the generalized Langlands correspondence. The suggestion with respect to and even beyond the key fundamental inspiration from Breuil-Schneider was given to me from Professor Sorensen almost eight years ago in 2016. It is obvious that we gained certain inspiration directly from L.Lafforgue, V.Lafforgue, Scholze and Fargues-Scholze.

\end{document}